\documentclass
[12pt]
{article}
\usepackage{amsmath,amssymb,amsthm
,makeidx,fancyhdr
}
\usepackage{color}

\theoremstyle{definition}
\newtheorem{definition}{Definition}
\newtheorem{ddefinition}{Definition}

\theoremstyle{plain}
\newtheorem{theorem}[definition]{Theorem}
\newtheorem{proposition}[definition]{Proposition}
\newtheorem{lemma}[definition]{Lemma}

\newtheorem{remark}{Remark}

\newcommand{\fr}{{}^\frown}
\newcommand{\dec}{\parallel}
\newcommand{\name}{\dot}
\newcommand{\can}{\check}
\newcommand{\force}{\Vdash}

\newcommand{\la}{\langle}
\newcommand{\ra}{\rangle}
\newcommand{\elem}{\prec}
\newcommand{\uhr}{\upharpoonright}

\newcommand{\power}{\mathcal{P}}
\newcommand{\rar}{\rightarrow}
\newcommand{\lar}{\leftarrow}
\newcommand{\ol}{\overline}
\newcommand{\po}{\mathbb{P}}
\newcommand{\ola}{\overline{\alpha}}
\newcommand{\olk}{\overline{\kappa}}
\newcommand{\old}{\overline{\delta}}


\newcommand{\D}{\mathfrak{D}}
\newcommand{\R}{\mathfrak{R}}
\newcommand{\E}{\overline{E}}

\newcommand{\rad}{\po_{\E,\lambda}}
\newcommand{\K}{K}

\DeclareMathOperator{\GCH}{GCH}
\DeclareMathOperator{\cf}{cf}
\DeclareMathOperator{\pcf}{pcf}
\DeclareMathOperator{\cov}{cov}
\DeclareMathOperator{\pp}{pp}

\def\l{{\langle}}
\def\r{{\rangle}}

\title{On the Splitting Number at Regular Cardinals}
\author{Omer Ben-Neria and Moti Gitik\footnote{The second author was partially supported by ISF grant no. 58/14.}}
\date{\today}

\begin{document}
\maketitle
\begin{abstract}
Let $\kappa$, $\lambda$ be regular uncountable cardinals such that $\lambda > \kappa^+$ is not a successor of a singular cardinal of low cofinality. 
We construct a generic extension with $s(\kappa) = \lambda$ starting from a ground model in which  $o(\kappa) = \lambda$ and prove that assuming $\neg 0^{\P}$,
$s(\kappa) = \lambda$ implies 
that $o(\kappa) \geq \lambda$ in the core model.
\end{abstract}

\section{Introduction}
The splitting number is a cardinal invariant mostly known for its continuum version $s = s(\aleph_0)$.
Generalizations of this invariant to regular uncountable cardinals have been studied
mainly by S. Kamo, T. Miyamoto, M. Motoyoshi, T. Suzuki
  \cite{suzuki} and J. Zapletal \cite{Zapletal}.
  \\
  For a cardinal $\kappa$ and two sets $a,x\in [\kappa]^{\kappa}$ we say $x$ splits $a$ if both $a \setminus x$ and $a \cap x$ have cardinality $\kappa$. A family of sets $F \subset [\kappa]^\kappa$ is a splitting family if for all $a \in [\kappa]^\kappa$ there exists some $x \in F$ which splits $a$. The splitting number $s(\kappa)$ is the minimal cardinality of a splitting family $F \subset [\kappa]^{\kappa}$.
   \\M. Motoyoshi showed that for a regular uncountable cardinal $\kappa$, $s(\kappa) \geq \kappa$ if and only if $\kappa$ is inaccessible, and
   T. Suzuki \cite{suzuki} proved that $s(\kappa) \geq \kappa^{+}$ if and only if $\kappa$ is weakly compact.
 S. Kamo and T. Miyamoto independently showed
 how to force  $s(\kappa) \geq \kappa^{++}$ from the assumption of a $2^\kappa-$supercompact cardinal $\kappa$.
  J. Zapletal  \cite{Zapletal} proved that $s(\kappa) = \kappa^{++}$ implies there exists an inner model with a measurable cardinal $\kappa$ with $o(\kappa) = \kappa^{++}$.
  \\The question of whether the lower bound of Zapletal can be improved remained open.
    The purpose of the present paper is to answer it negatively and to show the following:

   \begin{theorem}\label{theorem - forcing}
   Let $\kappa$, $\lambda$ be regular uncountable cardinals such that $\lambda > \kappa^+$ is not a successor
   of a singular cardinal of cofinality $\leq \kappa$. Assuming $\GCH$, $o(\kappa) \geq \lambda$ implies
   there exists a forcing extension in which $\kappa$ is regular and $s(\kappa) = \lambda$. \footnote{We believe that it is possible to modify the proof of Theorem \ref{theorem - forcing} in a way that will include all regular cardinals $\lambda > \kappa^+$.}
   \end{theorem}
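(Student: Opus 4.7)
The plan is to force with a Prikry-type forcing $\rad = \po_{\E,\lambda}$ built from a coherent sequence $\E = \la \mE_\alpha : \alpha < \lambda \ra$ of normal measures on $\kappa$, whose existence follows from $o(\kappa) \geq \lambda$ under $\GCH$. The generic filter will define a distinguished set $A \in [\kappa]^\kappa$ in $V[G]$ (with $\kappa$ remaining regular) that is diagonalized against every family of $<\lambda$-many subsets of $\kappa$ in the extension. This will yield $s(\kappa) \geq \lambda$, while a straightforward enumeration of a splitting family of size $\lambda$ will give the reverse inequality.

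For the setup, I would take conditions of $\rad$ to be pairs $(p,T)$ where $p$ is a finite working part recording initial information about the generic object and $T$ is a measure-one tree of possible extensions, stratified by rank along $\E$ so that higher-order measures constrain higher levels. Standard arguments in the Prikry/Magidor/Radin tradition then establish the Prikry property, preservation of $\kappa$ as a regular cardinal, preservation of cardinals $\geq \lambda$, and a $\lambda$-c.c.-like property ensuring that every subset of $\kappa$ in the extension admits a nice name of hereditary size controlled by $\lambda$. The hypothesis that $\lambda$ is not the successor of a singular cardinal of cofinality $\leq \kappa$ enters precisely here, via Shelah's pcf-theoretic covering arguments that otherwise obstruct the chain condition; $\GCH$ keeps both the cardinality of $\rad$ and the number of nice names bounded by $\lambda$.

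The heart of the proof is the lower bound $s(\kappa) \geq \lambda$. Given any family $F = \{x_\xi : \xi < \mu\} \subset [\kappa]^\kappa$ in $V[G]$ with $\mu < \lambda$, choose nice names $\dot x_\xi$ of hereditary size $<\lambda$. For each $\xi$, a Prikry-style argument that integrates successively along the measures $\mE_\alpha$ produces an ordinal $\alpha_\xi < \lambda$ and a condition $(p_\xi,T_\xi) \in G$ forcing that beyond stage $\alpha_\xi$ the generic $A$ is either almost contained in $\dot x_\xi$ or almost disjoint from it. Since $\mu < \lambda = \cf(\lambda)$, the supremum $\alpha^* = \sup_{\xi<\mu} \alpha_\xi$ still lies below $\lambda$, and one extracts from $A$ a tail $A' \in [\kappa]^\kappa$ which no $x_\xi$ splits, so $F$ is not a splitting family. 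The upper bound $s(\kappa) \leq \lambda$ follows either from the trivial bound $s(\kappa) \leq 2^\kappa$ once $2^\kappa$ is computed in $V[G]$ or by producing an explicit splitting family of cardinality $\lambda$ from the canonical structure of the coherent sequence.

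The main obstacle is engineering the ``almost-containment dichotomy'' for $\rad$-names — that for every name $\dot x$ of a subset of $\kappa$, some ordinal $\alpha<\lambda$ witnesses that, beyond stage $\alpha$, the generic $A$ is forced to be almost inside or almost outside $\dot x$. This must be built into the combinatorial design of $\rad$ by carefully matching the length $\lambda$ of $\E$ with the levels of the tree constraints, and it is where successor-case pcf issues bite: the cofinality hypothesis on the predecessor of $\lambda$ is what prevents cardinal collapse and allows the nice-name counting to succeed. Removing it, as the authors remark in the footnote, would require genuinely new ideas.
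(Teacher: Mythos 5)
There is a genuine gap, and it sits exactly at what you call the heart of the proof. The ``almost-containment dichotomy'' as you state it --- that for \emph{every} name $\dot x$ of a subset of $\kappa$ there is a stage beyond which the single generic set $A$ is forced almost inside or almost outside $\dot x$ --- is impossible. If one fixed set $A\in[\kappa]^\kappa$ of $V[G]$ had this property against all of $\power(\kappa)^{V[G]}$, then $\{x\subset\kappa : |A\setminus x|<\kappa\}$ would be an ultrafilter on $\kappa$ generated by a single set, which fails for any regular $\kappa$: split $A$ into two pieces of size $\kappa$, both in $V[G]$. The witness to non-splitting must \emph{depend on the family} $F$, and arranging that dependence is the real content of the theorem. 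The paper does it by taking an elementary submodel $N\elem H_\theta$ with $|N|<\lambda$, ${}^\kappa N\subset N$ and $N\cap\lambda=\delta$ containing the names for $F$; showing $N\cap\rad$ is a complete subforcing isomorphic to $\po_{\E\uhr\delta,\delta}$; using the fact that $\delta$ is a \emph{local repeat point} to extend the normal measure $E_\delta(\kappa)$ to an ultrafilter $U_\delta$ of $V[G\cap N]$ (which contains all the $x_\xi$); and proving that the quotient forcing adds a set $k_\delta$ with $k_\delta\subset^* X$ for every $X\in U_\delta$. Since each $x_\xi$ or its complement lies in $U_\delta$, no $x_\xi$ splits $k_\delta$. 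Nothing in your outline produces this family-dependent localization, and the uniform version you propose cannot be repaired.

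Two further problems. First, your choice of forcing --- a Radin-type forcing built from a coherent sequence of normal \emph{measures} --- is precisely the option the paper's introduction rules out: a measure sequence of length $\lambda>\kappa^+$ under $\GCH$ has repeat points, the Radin forcing is then equivalent to the forcing with the initial segment up to the first repeat point, so $2^\kappa$ stays $\kappa^+$ and $s(\kappa)\leq 2^\kappa=\kappa^+<\lambda$. The paper's essential move is to repackage each measure $U_\alpha$ as a $(\kappa,\kappa+\alpha+1)$-extender and use Merimovich's extender-based Magidor--Radin forcing, whose extra generators destroy the global repeat-point collapse while leaving ``local repeat points'' at the ordinals $\delta=N\cap\lambda$. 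Second, the hypothesis that $\lambda$ is not the successor of a singular of cofinality $\leq\kappa$ is not used for a chain condition (the forcing is $\kappa^{++}$-c.c.\ outright); it is used, together with $\GCH$, to find submodels $N$ that are closed under $\kappa$-sequences with $N\cap\lambda\in\lambda$ and $|N|<\lambda$.
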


   \begin{theorem}\label{theorem - inner model}
   Let $\kappa$, $\lambda$ be regular uncountable cardinals such that $\lambda > \kappa^+$ is not a successor
   of a singular cardinal of cofinality $\leq \omega_1$. Assuming $\neg 0^{\P}$, 
   $s(\kappa) \geq \lambda$ implies there is an inner model in which $o(\kappa) \geq \lambda$. 
   \end{theorem}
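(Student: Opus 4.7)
The strategy is to prove the contrapositive: assume $\neg 0^{\P}$ and, in the Mitchell core model $K$, that $o^K(\kappa) < \lambda$; I will then build a splitting family at $\kappa$ of cardinality strictly less than $\lambda$, giving $s(\kappa) < \lambda$. Set $\theta = o^K(\kappa)$ and $\mu = \theta + \kappa^+$, both below $\lambda$. The restriction that $\lambda$ is not a successor of a singular cardinal of cofinality $\leq \omega_1$ will enter through cardinal arithmetic: under $\neg 0^{\P}$ the Mitchell covering lemma holds in a form that controls $\chi^{\aleph_1}$ for $\chi < \lambda$, and this control stays below $\lambda$ precisely in the allowed range of $\lambda$.

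The two core ingredients are Mitchell's covering theorem for $K$ below $0^{\P}$ --- so that each $a \in [\kappa]^\kappa$ in $V$ is contained in some $B \in K$ with $|B|=\kappa$, which is available because $\kappa$ is weakly compact by Suzuki's theorem quoted in the introduction and hence $\kappa \geq \aleph_2$ --- together with the coherent sequence $\la U_\alpha : \alpha < \theta \ra$ of normal measures on $\kappa$ available inside $K$. From this coherent sequence, together with the premice below its top, I would extract a family $\mathcal F \subseteq K \cap [\kappa]^\kappa$ of cardinality at most $\mu$ that splits every $B \in K \cap [\kappa]^\kappa$: at each level $\alpha < \theta$ the measure $U_\alpha$ and its representing functions contribute splitting witnesses, generalising Zapletal's argument at level $\kappa^{++}$ up the coherent sequence of length $\theta < \lambda$.

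The main obstacle, and the technical heart of the proof, is to promote this internal splitting property of $\mathcal F$ against members of $K$ into an external splitting property against an arbitrary $a \in V$: a $K$-splitter of a cover $B \supseteq a$ need not split $a$, because $x \cap a$ or $a \setminus x$ may be bounded in $\kappa$. To overcome this I would use a finer form of the covering lemma --- writing $a$ as the union of $\aleph_1$ many $K$-pieces modulo a bounded set --- and then run a pigeonhole argument over these $\omega_1$ pieces against the $\theta$-long coherent sequence, locating a single $x \in \mathcal F$ which simultaneously splits uncountably many of the pieces, which then forces $x$ to split $a$ itself. It is this pigeonhole that collapses exactly when $\lambda$ is a successor of a singular cardinal of cofinality $\leq \omega_1$, which is why that configuration must be excluded from the statement.
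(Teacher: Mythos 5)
There is a genuine gap, and it sits exactly where you put the ``technical heart'': the step that promotes splitting of $K$-sets to splitting of an arbitrary $a\in[\kappa]^\kappa$. The ``finer form of the covering lemma'' you invoke --- writing $a$ as a union of $\aleph_1$ many $K$-pieces modulo a bounded set --- is not a theorem. Covering below $0^{\P}$ bounds sets of ordinals \emph{from above} (every $a$ is contained in some $B\in K$ of size $|a|+\aleph_1$); it gives no decomposition of $a$ \emph{from below} into core-model pieces. Indeed, if $a$ could be written as $\bigcup_{i<\omega_1}b_i$ modulo a bounded set with each $b_i\in K$, then by regularity of $\kappa>\omega_1$ some $b_i$ would be unbounded and almost contained in $a$; but a subset of $\kappa$ generic for $\kappa$-Cohen forcing over $K$ contains no unbounded $K$-set, so no such decomposition exists in general. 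Since the internal half of your argument is essentially vacuous anyway ($|\mathcal{P}(\kappa)\cap K|\le(\kappa^+)^K\le\kappa^+$, so a family splitting all $K$-sets of size $\kappa$ exists trivially), all the content of the proof was riding on this broken step, and the pigeonhole over the $\omega_1$ pieces never gets off the ground. A secondary gap: your Step-II analogue --- that the excluded values of $\lambda$ are exactly where ``control of $\chi^{\aleph_1}$'' fails --- needs an actual argument; the paper gets it from $\pp(\beta)>\beta^+$ forcing unboundedly many $K$-measurables below $\beta$ (Gitik's SCH results plus Shelah's $\cov/\pp$ theorem), contradicting $\neg 0^{\P}$.

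The paper's proof shares your choice of candidate family (the subsets of $\kappa$ lying in a structure of size $<\lambda$) but handles the unsplit set in the opposite way: rather than splitting $a$ by hand, it \emph{uses} $a$. Fixing a mouse $J^E_\eta$ carrying all $K$-extenders on $\kappa$, one first bounds by some $\tau<\lambda$ the values $i(\kappa)$ over all iterations of $J^E_\eta$ that use each extender at most $\omega_1$ times (``mild'' iterations), then takes $N\elem H_\theta$ with $\tau+1\subset N$, $|N|<\lambda$, ${}^{\omega_1}N\subset N$. If no $x\in\mathcal{P}(\kappa)\cap N_0$ splits $a$, then $U_a=\{x\in\mathcal{P}(\kappa)\cap N_0 : |a\setminus x|<\kappa\}$ is a $\kappa$-complete $N_0$-ultrafilter, and the ultrapower $i^*:N_0\to N^*$ is a well-founded elementary embedding with $i^*(\kappa)>2^\kappa>\tau$. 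Core-model theory (the coiteration of $K(N_0)$ with $K(N^*)$, factoring $i^*\uhr K(N_0)=k\circ\pi$ with $cp(k)>\pi(\kappa)$, and Gitik's theorem forcing $\pi\uhr J^E_\eta$ to be mild) then yields $i^*(\kappa)\le\tau$, a contradiction. This is the mechanism your proposal is missing: the unsplit set is converted into an ultrafilter and an embedding, and it is the impossibility of that embedding --- not a direct combinatorial splitting --- that closes the argument. The $\omega_1$-closure of $N$ (hence of $N^*$), not a pigeonhole over $\omega_1$ covering pieces, is where the hypothesis on $\lambda$ is consumed.
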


  The following explains some ideas behind the forcing construction. The basic construction of S. Kamo, as sketched
  in \cite{Zapletal}, starts with a supercompact cardinal $\kappa$ and uses a $\kappa-$support iteration $\po_{\kappa^{++}} = \{\po_\alpha,Q_\alpha,\mid \alpha < \kappa^{++}\}$ 
  of generalized Mathias forcings $Q_\alpha =\po(U_\alpha)$ which adds a generating set $k_\alpha \subset \kappa$ to the $V^{\po_\alpha}$ measure $U_\alpha$ on $\kappa$, 
  i.e.  $k_\alpha \subset^* x$ for every set $x \in U_\alpha$. This iteration satisfies the $\kappa^{+}-c.c.$ So every family $F \subset [\kappa]^{\kappa^+}$ in $V^{\po_{\kappa^{++}}}$ 
  belongs already to $V^{\po_\alpha}$, for some $\alpha < \kappa^{++}$.  The fact that the rest of the iteration $\po_{\kappa^{++}}/\po_\alpha$ adds an $U_\alpha-$generating set, 
  implies that no family $F \subset [\kappa]^{\kappa}$ in $V^{\po_\alpha}$ is a splitting family in the final model $V^{\po_{\kappa^{++}}}$.
  
In our situation, we do not have a supercompact cardinal, and so it is unclear how to use generalized Mathias forcings.
However there is a natural replacement, the Radin forcing, which also produces generating sets.
Iteration of Radin forcing is problematic, but in many cases it is possible to avoid it.
So, suppose that $\kappa$ is a measurable of the Mitchell order $o(\kappa)\geq \kappa^{++}$, which is necessary by the result of
Zapletal \cite{Zapletal}. Let $\vec{U}=\l U_\alpha \mid \alpha<\kappa^{++}\r$ be a witnessing sequence of measures over $\kappa$.
The sequence is long enough to have repeat points (we assume $\GCH$).
Consider first applying Radin forcing with $\vec{U}$. This forcing is equivalent to the Radin forcing with the initial segment of $\vec{U}$, up to its first
repeat point. This implies that $2^\kappa$ will remain $\kappa^+$, and hence $s(\kappa)$ will not increase.\\

The next attempt will be to use the extender based Radin forcing, in order to blow up simultaneously the power of $\kappa$.
But how do we do this only with measures? It is possible to assume initially a bit more like ${\cal P}^2(\kappa)$--hyper-measurability, and forcing with 
a Mitchell increasing sequence of $\kappa^{++}-$ extenders. Although this version looks promising, it has some specific problems, for example it fails to introduce measurability of $\kappa$ in
some suitable intermediate extensions, which is a key of the construction (see section \ref{section - forcing}).
It turns out that the solution hides in the measures but requires a modification in the point of view. The basic idea is to perceive each measure $U_\alpha$ as an extender.
Namely, let $j_\alpha:V \to M_\alpha$ be the corresponding ultrapower embedding.
Derive a $(\kappa,\kappa+\alpha+1)$--extender $E_\alpha$ from $j_\alpha$, i.e.
\\$E_\alpha=\l E_\alpha(\beta)\mid \beta\leq \kappa+\alpha\r$, where
$X \in E_\alpha(\beta)$ iff $\beta \in j_\alpha(X)$.
Note that $\kappa$ is the single generator of $E_\alpha$ and all $ E_\alpha(\beta)$'s (with $\beta\geq \kappa$) are isomorphic to $U_\alpha$.
At the first glance, this replacement looks rather useless.
However, it turns out that there is a crucial difference
between the usual Prikry, Magidor, and Radin forcings and their extender based versions. This difference can be used to create a more complex repeat point structure. From a global point of view there are no repeat points in the (final) generic extension since the generic sequence added for $E_\alpha(\alpha)$ will allow us to separate $U_\alpha$ from the rest of the measures. On the other hand we will show that with certain restrictions, there are many subforcings which provide ``local repeat points". The local repeat points will be used to extend some $U_\alpha$ in the generic extensions by the subforcings. The splitting number argument is completed by proving that the rest of the extender based forcing adds a generating set to the extension of $U_\alpha$.
\\On the other hand, once one is interested in increasing the power of $\kappa$ only, then the number of generators plays the crucial role.
Namely, using a full extender or only its measures corresponding to the generators in the extender based Prikry forcing has the
same effect on the power of $\kappa$.

Let us show, for example, that the Prikry forcing and its extender based variation are not the same.
Let $U$ be a normal measure over $\kappa$, $j_U:V \to M_U$ the corresponding elementary embedding.
Define a $(\kappa,\kappa^+)$--extender $E=\l E(\beta) \mid \beta<\kappa^+\r$ derived from $j_U$:
$$X \in E(\beta) \text{ iff } \beta\in j_U(X).$$
Clearly, $U$ and $E$ have the same elementary embedding and the same ultrapower.
However the Prikry forcing ${\cal P}_U$ and the extender based  Prikry forcing ${\cal P}_E$
are not the same. Obviously, ${\cal P}_U$ is a natural subforcing of  ${\cal P}_E$, but the last forcing is richer.
Thus, let $\l t_\beta \mid \beta<\kappa^+\r$ be generic sequences added by a generic $G({\cal P}_E)$ of ${\cal P}_E$, i.e.
$t_\beta$ is a generic $\omega$--sequence for $E(\kappa,\beta)$.
Consider
$$A:=\{\beta<\kappa^+ \mid t_\beta(0)\not = t_\kappa(0) \}.$$
Then, obviously, $A \not \in V$. However, for every $\alpha<\kappa^+$,
$A \cap \alpha \in V$.
This implies that $A$ is not in a Prikry extension, since by \cite{G-Ka-Ko}
such extension cannot
add fresh subsets to $\kappa^+$.

So, in general, a restriction of an extender to the supremum of its generators may produce a weaker forcing than the forcing with the full extender.

Carmi Merimovich in \cite{carmi} introduced a very general setting for dealing with the extender based Magidor and Radin forcings.
The forcing used here will fit nicely his framework. We assume a familiarity with Merimovich's paper \cite{carmi} and will follow his notation.\\

\noindent \textbf{Acknowledgements:}\\
The authors are grateful to Ralf Schindler and the referee, for pointing out several errors in an earlier version
of this paper. The second author would like to thank J. Cummings and S. Friedman for stating to him the question about the consistency of the splitting number.

\section{Forcing $s(\kappa) = \lambda$ from $o(\kappa) = \lambda$}\label{section - forcing}

In this section we prove Theorem \ref{theorem - forcing}.
Let $\kappa, \lambda$ be regular cardinals such that $\kappa^+ < \lambda$ and $o(\kappa) = \lambda$.
\\Fix a Mitchell increasing sequence of extenders
 $\E = \l E_\alpha \mid \alpha < \lambda\r$ such that for every $\alpha < \lambda$

  \begin{enumerate}
    \item $\E \upharpoonright \alpha \in $Ult$(V,E_\alpha)$,\\
    where $\E \upharpoonright \alpha=\l E_\beta \mid \beta<\alpha \r$,
    \item $E_\alpha$ is a $(\kappa, \kappa+\sigma(E_\alpha))$--extender, for some $\sigma(E_\alpha),\alpha < \sigma(E_\alpha) < \lambda$.
  \end{enumerate}

  The first non-trivial case is $\lambda=\kappa^{++}$. We fix a Mitchell increasing sequence of measures $\l U_\alpha \mid \alpha<\kappa^{++}\r$
  on $\kappa$ and derive extenders $E_\alpha $ from the ultrapower embeddings
  $j_\alpha:V \to M_\alpha$ by $U_\alpha$'s.
  The simplest is to take $\sigma(E_\alpha)=\alpha+1$.

Following \cite{carmi}, we denote the Magidor-Radin extender based forcing associated with $\E$ by $\rad$.

 We will argue that $V^{\rad}$ satisfies $s(\kappa)=\lambda$.
 
 The outline of this argument is similar to the construction of S. Kamo, as sketched in
   \cite{Zapletal}. Every small family $F \subset [\kappa]^{\kappa}$ in a generic extension $V^{\po_{\E,\lambda}}$ is contained in a generic extension of a sub-forcing of $\po'$ of $\po_{\E,\lambda}$ for which   $\kappa$ is measurable in $V^{\po'}$, and the rest of the  forcing  $\po_{\E,\lambda}/\po'$ adds a generating set to some $V^{\po'}$ measure on $\kappa$. The sub-forcings we will use are the restrictions of $\rad$ to some suitable models $N \elem H_\theta$ for a sufficiently large regular cardinal $\theta$.

 Let $\{ \name{X}_i \mid i < \tau\}$ be a sequence of $\tau < \lambda$ many nice $\rad-$names of subsets of $\kappa$. Since $\rad$ satisfies the $\kappa^{++}.c.c$, we may find an elementary substructure $N \elem H_\theta$ for some sufficiently large regular $\theta$, which satisfies $\kappa^{+}\subset N$, $\rad,\E, \{ \name{X}_i \mid i < \tau\} \in N$, $|N| < \lambda$, and such that every $\name{X}_i$ is a $\rad \cap N$ name.
 
The key of the argument is that it is possible for $N \cap \rad$ to be a sub-forcing of $\rad$ by which $\kappa$ remains measurable and the complement forcing $\rad/(N \cap \rad)$ adds a generating set to some measure on $\kappa$. More precisely, assuming that $N \cap \lambda = \delta < \lambda$, we prove that $N \cap \rad$ is isomorphic to the extender based poset $\po_{\E\uhr\delta, \delta}$ associated to the restricted sequences $\E\uhr\delta = \{ E_\alpha \mid \alpha < \delta\}$.  We then apply a repeat point argument to prove that the $E_\delta-$normal measure $E_\delta(\kappa)$ extends to a measure $U_\delta$ in $V^{N \cap \rad}$, and prove that the completion poset adds a generating set $k_\delta$ to this measure.

\subsection{The sub-forcing $N \cap \rad$}
Let us fix some sufficiently large regular cardinal $\theta$ such that $\rad \in H_\theta$. Throughout this section we shall consider elementary substructures $N \elem H_\theta$ which satisfy:
\begin{itemize}
\item $|N| < \lambda$,
\item $\rad \in N$,
\item ${}^\kappa N \subset N$,
\item $N \cap \lambda = \delta \in \lambda$.
\end{itemize}
This is possible since we assume $\lambda$ is not a successor of a singular cardinal of cofinality $\leq \kappa$ and $\GCH$. 

\begin{lemma}\label{lemma - subforcing}
The poset $N \cap \rad$ is a sub-forcing of $\rad$, 
that is, the inclusion map of $N\cap \rad$ in $\rad$ is a complete embedding.
\end{lemma}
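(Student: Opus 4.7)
The plan is to verify the three standard conditions for the inclusion $\iota \colon N \cap \rad \hookrightarrow \rad$ to be a complete embedding: (a) order preservation; (b) incompatibility preservation, equivalently that two conditions of $N \cap \rad$ which are compatible in $\rad$ are already compatible in $N \cap \rad$; and (c) the reduction property, that every $r \in \rad$ admits some $r^* \in N \cap \rad$ such that every $q \leq r^*$ in $N \cap \rad$ is compatible in $\rad$ with $r$. Condition (a) is immediate. I would reduce both (b) and (c) to the construction of a canonical monotone ``projection'' map $\pi \colon \rad \to N \cap \rad$, $r \mapsto r \uhr N$, satisfying $r \leq r \uhr N$ and $q \leq r \Rightarrow q \uhr N \leq r \uhr N$.

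To define $\pi$, recall from Merimovich's framework that a condition in $\rad$ is essentially a finite stem followed by an upper part whose components are indexed by a ``support'' $\supt(r) \subseteq \lambda$ of size at most $\kappa$, each component carrying a function into the generators of the relevant extender $E_\alpha$ together with a measure-one set. Given $r \in \rad$, I set $r \uhr N$ to be obtained by restricting the support of $r$ to $\supt(r) \cap N$ and correspondingly restricting all stem-functions and measure-one sets. Since $N \cap \lambda = \delta$, the restricted support lies in $\delta$, and the projected measure-one sets are measure one for $E_\alpha \uhr \delta$ via the standard projection maps between extenders. Since $|\supt(r) \cap N| \leq \kappa$ and the resulting data are coded by $\kappa$-sequences of elements of $N$ (using $\rad, \E \in N$ and $\kappa + 1 \subseteq N$), the hypothesis ${}^\kappa N \subseteq N$ yields $r \uhr N \in N$, hence $r \uhr N \in N \cap \rad$.

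The projection $\pi$ is monotone and satisfies $r \leq r \uhr N$ by construction, which at once yields (b): if $p, p' \in N \cap \rad$ have a common extension $r \in \rad$, then $r \uhr N$ is a common extension lying in $N \cap \rad$, since $p \uhr N = p$ and $p' \uhr N = p'$ (as $p, p'$ are already supported in $\delta$). For (c), I would take $r^* = r \uhr N$: given $q \leq r^*$ in $N \cap \rad$, all coordinates of $q$ sit in $\delta$, while the extra data of $r$ over $r \uhr N$ lives on $\supt(r) \setminus \delta$; using that the measure-one sets of $r$ on those extra coordinates project faithfully to those on $\supt(r) \cap \delta$, one lifts the stem of $q$ through the measure-one sets of $r$ to produce a condition $r'$ extending both $r$ and $q$.

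The principal obstacle is the bookkeeping for the projection inside Merimovich's extender based framework: one must verify that restricting the support, the tree of stem-functions, and the measure-one sets commutes correctly with the forcing order, and that the projected measure-one sets are genuinely measure one for the restricted extenders $E_\alpha \uhr \delta$. This relies on the standard behavior of extender projections along initial segments of generators, on the $\kappa$-completeness of the measures involved, and on the Mitchell coherence of $\E$. Once these technical points are dispatched, (b) and (c) follow from formal manipulations with $\pi$ as outlined.
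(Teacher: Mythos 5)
Your proposal follows essentially the same route as the paper: the paper also defines the restriction map $p \mapsto p\uhr N$ by cutting the support $dom(f^{p_\rar})$ down to $N$ and restricting the tree pointwise, notes that $\leq$ and incompatibility agree between $N\cap\rad$ and $\rad$ (the paper gets incompatibility directly from $\rad\in N$ and elementarity, where you route it through the projection), and verifies the reduction property by checking that any $q'\in N$ extending $p\uhr N$ is compatible with $p$. The only caution is that for this lemma the restricted tree need only be measure one for the full intersection $\bigcap_{\alpha<\lambda}E_\alpha(d\cap N)$ --- the identification of $N\cap\rad$ with the forcing for the truncated sequence $\E\uhr\delta$ is a separate, later step requiring the local repeat point lemma, so your phrasing about measure-one sets for the restricted extenders slightly anticipates that result.
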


\begin{proof}
It is clear that for $p,q \in N \cap \rad$, $p \leq_{\rad} q$ if and only if $p \leq_{N \cap \rad} q$. Since $\rad \in N$, it is also clear that $p,q$ are incompatible in $\rad$ if and only if they are incompatible in $N \cap \rad$.
For every $p \in \rad$ define $q = p\uhr N \in \rad \cap N$ as follows:
First consider a condition $p = p_\rar = \la f,A\ra$ which consists only of its top part.  Denote $d = dom(f)$, then $A$ is a $d-$tree. For every $\nu \in OB(d)$ the restricted function $\nu\uhr N$ belongs to $OB(d\cap N)$. Define a $d \cap N-$ tree, $A\uhr N = \{ \la \nu_0\uhr N,...,\nu_n\uhr N \ra \mid \la \nu_0,...,\nu_n\ra \in A\ra$. Now set $p\uhr N = \la f \uhr N, A \uhr N\ra$.

For a general condition $p = p_\lar \fr p_\rar \in \rad$, then set $p\uhr N = p_\lar \fr (p_\rar\uhr N)$. It is straightforward to verify that for every $p \in \rad$, if $q' \in N$ is an extension of $p \uhr N$ then $q'$ is compatible with $p$. Therefore $N \cap \rad \subset \rad$ is a sub-forcing.
\end{proof}

Therefore, for every $V-$generic set $G \subset \rad$ we have that $G \cap N$ is $V-$generic for the poset $N \cap \rad$.

We would now like to show that $N \cap \rad$ is isomorphic to the poset $\po_{\E\uhr \delta, \delta}$ which is the Magidor-Radin forcing associated with the restricted sequence $\E \uhr \delta = \{E_\alpha \mid \alpha < \delta\}$. Writing that $\po_{\E\uhr\delta,\delta}$ is the forcing associated with $\E\uhr\delta$ entails a nontrivial statement that $\delta = \sup_{\alpha < \delta}j_{E_\alpha}(\kappa)$.  \footnote{see section 4 in \cite{carmi}.} Our assumption that $\alpha > \sigma(E_\alpha)$ for every $\alpha < \lambda$ implies that $\delta \geq \sup_{\alpha < \delta}j_{E_\alpha}(\kappa)$. Also, for every $\alpha < \lambda$ we assume $\sigma(E_\alpha) < \lambda$. Since $\lambda$ is a regular cardinal $j_{E_\alpha}(\kappa) < \lambda$, and if $\alpha < \delta = N \cap \lambda$ then $E_\alpha \in N$, so $j_{E_\alpha}(\kappa) \in N \cap \lambda = \delta$.

Another important consequences of $\delta = N \cap \lambda$ is the fact it is a local repeat point.
\begin{definition}
$\rho$ is a local repeat point of $\E$ if for every $x \in [\rho]^{\leq \kappa}$, letting $d = \{\ola \in \D \mid \alpha \in x\}$,
$$\bigcap_{\alpha < \rho}E_\alpha(d) = \bigcap_{\alpha < o(\E)}E_\alpha(d).$$
\end{definition}

\begin{lemma}\label{lemma - rp}
$\delta = N \cap \lambda$ is a local repeat point.
\end{lemma}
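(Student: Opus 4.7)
The plan is to reduce to a standard elementarity argument by first showing that every relevant witness $A$ lies in $N$. Fix $x\in[\delta]^{\leq\kappa}$ and set $d=\{\ora{\alpha}\mid\alpha\in x\}$. Since $x\subseteq\delta\subseteq N$ and ${}^\kappa N\subseteq N$, we have $x\in N$, and therefore $d\in N$ by definability from $x$ and $\E$. The inclusion $\bigcap_{\alpha<\lambda}E_\alpha(d)\subseteq\bigcap_{\alpha<\delta}E_\alpha(d)$ is trivial, so the content is the reverse: every $A\in\bigcap_{\alpha<\delta}E_\alpha(d)$ must lie in $E_\beta(d)$ for every $\beta$ with $\delta\leq\beta<\lambda$.

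The main obstacle is showing $A\in N$. Here one leans on Merimovich's framework: $|OB(d)|\leq\kappa$, and each $\nu\in OB(d)$ is a function of cardinality ${<}\kappa$ from a subset of $d\subseteq N$ into $\kappa\subseteq N$, hence a ${<}\kappa$-sequence of elements of $N$, so $\nu\in N$ by ${}^\kappa N\subseteq N$. Consequently $OB(d)\subseteq N$, and since $A\subseteq OB(d)$ with $|A|\leq\kappa$, the set $A$ is a $\leq\kappa$-sized subset of $N$ and therefore lies in $N$ by another application of $\kappa$-closure.

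Once $A\in N$, the conclusion is a single minimum-of-$T_A$ computation. Let $T_A=\{\alpha<\lambda\mid A\notin E_\alpha(d)\}$; since $A,d,\E,\lambda\in N$, also $T_A\in N$. The hypothesis gives $T_A\cap\delta=\emptyset$. If $T_A$ were nonempty, then $\min T_A\in N$ would be an ordinal below $\lambda$, hence $\min T_A\in N\cap\lambda=\delta$, contradicting $T_A\cap\delta=\emptyset$. Therefore $T_A=\emptyset$, i.e., $A\in E_\beta(d)$ for every $\beta<\lambda$. The subtlety behind the whole argument is that neither $\delta$ nor the filter $\bigcap_{\alpha<\delta}E_\alpha(d)$ belongs to $N$, so elementarity cannot be applied to the filter itself; the reduction to $A\in N$ is precisely what allows the argument to proceed one set at a time.
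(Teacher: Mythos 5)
Your proof is correct, but it takes a genuinely different route from the paper's. The paper runs a counting argument: since each $E_\alpha(d)$ concentrates on a set of size $\kappa$, there are only $2^\kappa=\kappa^+<\lambda$ candidate measure-one sets, so the decreasing chain of filters $\bigcap_{\alpha<\tau}E_\alpha(d)$ stabilizes at some $\tau<\lambda$ by regularity of $\lambda$; elementarity of $N$ (applied to the existential statement ``there is a stabilization point $\tau<\lambda$'') then places such a $\tau$ below $\delta$, and the lemma follows for all measure-one sets at once. You instead argue one set at a time: any $A$ in the filter is a $\leq\kappa$-sized subset of $OB(d)\subseteq N$, hence an element of $N$ by ${}^\kappa N\subseteq N$, so the trace $T_A=\{\alpha<\lambda\mid A\notin E_\alpha(d)\}$ is an element of $N$ disjoint from $N\cap\lambda=\delta$ and therefore empty. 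Your version dispenses with the $\GCH$/cardinality count and the regularity of $\lambda$ within this lemma, at the price of leaning harder on the $\kappa$-closure of $N$ (to capture each $A$ rather than just $d$); the paper's version yields the slightly stronger conclusion that a single $\tau<\delta$ already witnesses stabilization of the whole chain. Both arguments are valid, and your closing remark correctly identifies why elementarity cannot be applied to the filter $\bigcap_{\alpha<\delta}E_\alpha(d)$ itself.
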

\begin{proof}
Take $x \in [\delta]^{\leq \kappa}$ and let  $d = \{\ola \in \D \mid \alpha \in x\}$. Then $d \in N$ since ${}^\kappa N \subset N$ so $\la E_\alpha(d) \mid \alpha < \lambda\ra \in N$ as well. Each $E_\alpha(d)$ measures $V_\kappa$, and since $\kappa^+ < \lambda$ and $\lambda$ is regular there exists some $\tau < \lambda$ such that
$$\bigcap_{\alpha < \tau}E_\alpha(d) = \bigcap_{\alpha < o(\E)}E_\alpha(d).$$
The elementarity of $N$ implies that there exists such $\tau < \lambda$ in $N$. Hence $\tau < \delta$ and the result follows.
\end{proof}

\begin{proposition}
$N \cap \rad$ is isomorphic to $\po_{\E\uhr\delta,\delta}$.
\end{proposition}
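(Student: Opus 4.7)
My plan is to exhibit the identity inclusion $\Phi : N \cap \rad \to \po_{\E\uhr\delta,\delta}$, sending each $p \in N \cap \rad$ to itself viewed as a condition of the restricted forcing, as a dense isomorphism. The preceding discussion and Lemmas \ref{lemma - subforcing}, \ref{lemma - rp} already supply the structural facts I will need --- namely $\delta = \sup_{\alpha < \delta} j_{E_\alpha}(\kappa)$, that the coordinates of any condition in $N$ stay below $\delta$, and the local repeat point property --- so the argument splits naturally into well-definedness, order-preservation, and density of the image.

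First I would verify well-definedness. Take $p \in N \cap \rad$ with top part $p_\rar = \la f, A\ra$ and $d = dom(f) \in N$, $|d| \leq \kappa$. Every extender index $\alpha < \lambda$ appearing in $d$ lies in $N \cap \lambda = \delta$; and by the discussion immediately preceding Lemma \ref{lemma - rp}, the entire relevant support also lies below $\delta$. Hence $d$ is a valid domain for $\po_{\E\uhr\delta,\delta}$. By Lemma \ref{lemma - rp},
\[
F_\E(d) \;=\; \bigcap_{\alpha < \lambda} E_\alpha(d) \;=\; \bigcap_{\alpha < \delta} E_\alpha(d) \;=\; F_{\E\uhr\delta}(d),
\]
so $A$ is measure-one for both forcings, and the lower part of $p$ is handled identically. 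Order-preservation in both directions is immediate because the two forcing orders agree on common conditions.

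Next I would establish density of the image. Given $q = \la g, B\ra \in \po_{\E\uhr\delta,\delta}$ with $e = dom(g) \subset \delta$, I aim to produce $p \in N \cap \rad$ with $\Phi(p) \leq q$. Since $\delta \subset N$, $|e| \leq \kappa$, and ${}^\kappa N \subset N$, we have $e \in N$; and since $g$ takes values in $V_\kappa \subset N$ (using $\kappa+1 \subset N$ together with ${}^\kappa N \subset N$), also $g \in N$. For the measure-one tree $B \in F_\E(e)$, my plan is to construct a refinement $B' \in N$ with $B' \subset B$ and $B' \in F_\E(e)$, by recursively selecting at each of the $\leq \kappa$ nodes a measure-one splitting set from $N$, and then assembling the tree using ${}^\kappa N \subset N$.

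The hard part will be this last construction of $B'$ inside $N$: producing a canonical $N$-refinement of an arbitrary Merimovich-style measure-one tree. The key tools are that each individual measure $E_\alpha(e) \in N$ for $\alpha < \delta$, that the filter $F_\E(e) = F_{\E\uhr\delta}(e)$ is in $N$ (via elementarity together with Lemma \ref{lemma - rp}), and that ${}^\kappa N \subset N$ ensures the assembled tree lies in $N$. Apart from this combinatorial core, the rest is bookkeeping that ensures no coordinate in any condition of $N \cap \rad$ escapes beyond $\delta$, via $N \cap \lambda = \delta$ and the regularity of $\lambda$.
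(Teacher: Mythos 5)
There is a genuine gap, and it sits exactly where you declare the map to be ``the identity inclusion.'' A condition of $N \cap \rad$ and a condition of $\po_{\E\uhr\delta,\delta}$ are not literally the same object: in Merimovich's framework the domain of $f^{p_\rar}$ consists of full extender sequences $\ola = \la\alpha\ra\fr\la E_\xi \mid \xi < o(\E),\ \alpha < j_{E_\xi}(\kappa)\ra$, whereas $\po_{\E\uhr\delta,\delta}$ is built from the truncated sequences $\ola\uhr\delta = \la\alpha\ra\fr\la E_\xi \mid \xi < \delta,\ \alpha < j_{E_\xi}(\kappa)\ra$; the same discrepancy propagates to the objects $\nu \in OB(d)$ and to the trees. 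So the identity map does not type-check, and the well-definedness step of your argument fails as stated. The actual content of the proposition is the explicit translation $T$ that replaces each $\ola\uhr\delta$ by $\ola$ throughout (in domains of functions, in objects, and hereditarily in trees), together with the verification --- via Lemma \ref{lemma - rp}, which you do invoke correctly --- that $T$ carries $\bigcap_{\alpha<\delta}E_\alpha(d\uhr\delta)$-trees to $\bigcap_{\alpha<o(\E)}E_\alpha(d)$-trees. Your displayed equality of filters is the right use of the local repeat point, but it is being applied to a map that is not well defined.

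The second problem is the density step. A dense embedding would only give forcing equivalence, whereas the paper needs $T$ to be an honest bijection later on (e.g.\ to set $q_1^* = T(q_1')$ in Lemma \ref{lem5}); more importantly, the ``hard combinatorial core'' you postpone --- refining an arbitrary measure-one tree $B$ to one lying in $N$ --- is a detour that never needs to be taken. Since $OB(d) \subset N$, the tree has size at most $\kappa$, and ${}^\kappa N \subset N$, the (translated) tree is automatically an element of $N$ with no refinement whatsoever. The real work in showing the map is onto $N \cap \rad$ is the converse characterization: $p \in \rad$ lies in $N$ if and only if $dom(f^{p_\rar}) \subset \D \cap \delta$, proved from $|f^{p_\rar}|\le\kappa$, $rng(f^{p_\rar}) \subset V_\kappa$, and ${}^\kappa N\subset N$. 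So the proposal both leaves its self-identified hard step unproved and mislocates where the difficulty actually lies.
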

\begin{proof}
We shall compare the structures of conditions $p \in N \cap \rad$ with conditions $q \in \po_{\E\uhr\delta,\delta}$.

By making a small abuse of notation, let us denote the set
$\{\ol{\alpha} \in \D \mid \alpha < \delta\}$\footnote{see \cite{carmi} 4.2 for the definition of $\D$} by $\D \cap \delta$. Then $dom(f^{p_\rar}) \subset \D\cap\delta$ for every $p \in N \cap \rad$. 
We claim that the property $dom(f^{p_\rar}) \subset \D\cap\delta$ actually characterizes the conditions of $\rad$ which belong to $N$. First note that for any $p \in \rad$ we have $p_\lar \in V_\kappa \subset N$. 
Second, since $|f^{p_\rar}| \leq \kappa$, $rng(f^{p_\rar}) \subset V_\kappa$, and ${}^{\kappa}N \subset N$, we get that $dom(f^{p_\rar}) \subset \D\cap\delta$ implies $f^{p_\rar} \in N$. Finally, if $f^{p_\rar} \in N$ then every $\nu \in OB(dom(f^{p_\rar}))$ must be a member of $N$ as well and as $|A^{p_\rar}| = \kappa$ we find that $A^{p_\rar} \in N$.

Let us now consider this with the structure of conditions $q \in \po_{\E\uhr\delta,\delta}$. First note that the extender sequences
$$\ol{\alpha} = \la \alpha \ra \fr \la E_\xi \mid \xi < o(\ol{E}), \alpha < j_{E_\xi}(\kappa)\ra$$
appearing in the components of conditions $p \in \rad$ \footnote{these include $d= dom(f^{p_\rar}) $ and $dom(\nu)$ for $\nu \in OB(d)$}
are now replaced by a shorter extender sequences
$$\ol{\alpha}\uhr\delta = \la \alpha \ra \fr \la E_\xi \mid \xi < \delta, \alpha < j_{E_\xi}(\kappa)\ra.$$
So the base set used in the domains of functions appearing in $\po_{\E\uhr\delta,\delta}$ is
$$\D\uhr \delta = \{\ola\uhr\delta \mid \alpha < \delta\},$$
functions $f' \in \po_{\E\uhr\delta,\delta}^*$ have domain $d' \in  [\D\uhr
\delta]^{\leq \kappa}$, $f' : d' \to \R^{<\omega}$.
Objects $\nu' \in OB(d')$, measures $E_\xi(d')$, $\xi < o(\E\uhr\delta) = \delta\}$, and $d'-$trees $T \subset [OB(d')]^{<\omega}$ are the appropriate variants of the cut  down sequence $\E\uhr\delta$.
For every $d \subset [\D]^{\leq \kappa}$, let $d\uhr\delta = \{\ola\uhr\delta \mid \ola \in d\}$. We conclude that by replacing the sequences $\ola\uhr\delta \in \D\uhr\delta$ with $\ola \in \D \cap \delta$, we can therefore construct a simple translation map $T$ such that
\begin{itemize}
\item for $d \in [\D\cap \delta]^{<\kappa}$, $T$ maps function $f' : d\uhr\delta \to \R^{<\omega}$ to functions $f : d \to \R^{<\omega}$, by replacing every $\ola\uhr\delta \in dom(f')$ with $\ola \in dom(f)$.
\item $T$ maps objects $\nu' \in OB(d\uhr\delta)$ to objects $\nu \in  OB(d)$, by replacing every $\ola\uhr\delta \in dom(\nu')$ with $\ola \in dom(\nu)$.
\item By extending $T$ hereditarily, $T$ maps $d\uhr\delta-$trees $A' \subset [OB(d\uhr\delta)]^{<\omega}$ to trees $A \subset [OB(d)]^{<\omega}$.
\end{itemize}
$T$ is clearly a bijection. We claim that $T$ maps $d\uhr\delta-$trees to $d-$trees. Following the definition of the measures $E_\alpha(d)$, it is clear that for every $\alpha < \delta$, $T$ maps $E_\alpha(d\uhr\delta)$ sets to $E_\alpha(d)$ sets, this implies that sets
$$X \in E(d\uhr\delta) = \bigcap_{\alpha < \delta}  E_\alpha(d\uhr\delta)$$
are mapped to sets
$$ T(X) \in \bigcap_{\alpha < \delta}  E_\alpha(d),$$
which by lemma \ref{lemma - rp} are members of $E(d)$. It is clear that in $Y \in E(d)$ we have $T^{-1}(Y) \in \bigcap_{\alpha < \delta} E_\alpha(d\uhr\delta)$. It follows that we can extend $T$ to a bijection from  $\po_{\E\uhr\delta,\delta}$ to $N \cap \rad$:
first, for $q_\rar = \la f',A'\ra \in \po_{\E\uhr\delta,\delta}^\rar$ let $T(q_\rar) = \la T(f'),T(A')\ra$. Then for general $q = q_\lar\fr q_\rar \in \po_{\E\uhr\delta,\delta}$ set $T(q) = q_\lar \fr T(q_\rar)$. Obviously $T$ respects $\leq,\leq^*$ so $T$ is an isomorphisms of Prikry type forcings.
\end{proof}

\noindent Our next goal is to show that the normal ultrafilter $E_\delta(\kappa) = \{X \subset \kappa \mid \kappa \in j_{E_\delta}(X)\}$ extends to a normal ultrafilter in a generic extension by $N \cap \rad$. We will apply a variant of the repeat point arguments in Section 5 of \cite{carmi} to our situation in which $\delta$ is a local repeat point of $\E$.
Let $j_{E_\delta} : V \to M_\delta \cong Ult(V,E_\delta)$ be the $E_\delta$ induced ultrapower. Since $\E$ is a Mitchell increasing sequence of extenders then $\E\uhr\delta$ is the sequence of extenders which appears on $\kappa$ in $M_\delta$. Moreover $\E\uhr\delta$ is used to generate the measure $E_\delta(d)$ for every $d \in [\D]^{\leq \kappa}$. More precisely \cite{carmi} defines
$$mc_\delta(d) = \{\la j_{E_\delta}(\ola), R_\delta(\ola)\ra  \mid \ola \in d, \alpha < j_{E_\delta}(\kappa)\}$$
where $R_\delta(\ola)$ corresponds to an end segment of $\ola$ and is given by
$$ R_\delta(\ola) = \la \alpha \ra \fr \{E_\tau \mid \tau < \delta, \alpha < j_{E_\tau}(\kappa)\}.$$
The measure $E_\delta(d)$ is defined by
$X \in E_\delta(d)$ if and only if $mc_\delta(d) \in j_{E_\delta}(X)$.

Let $p \in \rad$ and consider the end extension $j_{E_\delta}(p)_{\la mc_\delta(d)}$ of $j_{E_\delta}(p)$ in $j_{E_\delta}(\rad)$. As  $R_\delta(\olk) = \la \kappa \ra \fr \E\uhr\delta$ it follows that
$${j_{E_\delta}(p)_{\la mc_\delta(d)\ra}}_\lar \in \po_{\E\uhr\delta,\delta}.\footnote{We use the fact that $\po_{\E\uhr\delta,\delta}^{M_\delta} = \po_{\E\uhr\delta,\delta}$}$$
Now assuming that $p \in N \cap \rad$, it is  straight forward to verify that
$$T({j_{E_\delta}(p)_{\la mc_\delta(d)\ra}}_\lar) = p.$$
We are now ready to prove that $E_\delta(\kappa)$ extends in a generic extension by $N \cap \rad$. We first need the following preliminary lemma.

\begin{lemma}\label{lem5}
Let $p \in N \cap \rad$ and $\name{X}$ be such that $p \force \name{X} \subset \kappa$, then there exists an extension $q \leq^* p$ such that
$$j_{E_\delta}(q)_{mc_\delta(q_\rar)} \dec \can{\kappa} \in j_{E_\delta}(\name{X}).$$
\end{lemma}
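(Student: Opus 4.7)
The plan is to apply the Prikry property of $j_{E_\delta}(\rad)$ inside $M_\delta \cong \Ult(V, E_\delta)$ and pull the resulting condition back using \L o\'s. Setting $d = dom(f^{p_\rar})$, the discussion preceding the lemma shows that $j_{E_\delta}(p)_{mc_\delta(d)}$ is a condition of $j_{E_\delta}(\rad)$ whose lower part lies in $\po_{\E\uhr\delta,\delta}^{M_\delta} = \po_{\E\uhr\delta,\delta}$ and is carried to $p$ by the isomorphism $T$. Applying the Prikry property in $M_\delta$ to $j_{E_\delta}(p)_{mc_\delta(d)}$ and the formula $\can\kappa \in j_{E_\delta}(\name X)$, I would obtain a direct extension $r \leq^* j_{E_\delta}(p)_{mc_\delta(d)}$ in $M_\delta$ that decides the formula.

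Next I would manufacture a pure extension $q \leq^* p$ in $N \cap \rad$ whose image $j_{E_\delta}(q)_{mc_\delta(q_\rar)}$ recovers (a direct extension of) $r$. The extension $r$ enlarges the domain of the upper function and shrinks the tree using measures computed inside $M_\delta$. Translating back through the identification $\ola \mapsto \la j_{E_\delta}(\ola), R_\delta(\ola)\ra$, the new coordinates correspond to sequences $\ola \in \D$ with $\alpha < \delta$, and the shrunk tree pulls back to a set lying in each $E_\alpha(d)$ for $\alpha < \delta$. The local repeat point character of $\delta$ (Lemma \ref{lemma - rp}) identifies this intersection with $E(d) = \bigcap_{\alpha < \lambda} E_\alpha(d)$, so the resulting tree is legitimate for a $\rad$-condition. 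Since ${}^\kappa N \subset N$ and all data involved are coded by objects of cardinality at most $\kappa$ sitting below $\delta$, $q$ lands in $N \cap \rad$.

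The main obstacle is ensuring that $j_{E_\delta}(q)_{mc_\delta(q_\rar)}$ genuinely agrees with $r$, or at least directly extends it, rather than merely matching its shape. This requires careful bookkeeping with the explicit form of $mc_\delta$: the image of a coordinate $\ola \in dom(f^{q_\rar})$ under $j_{E_\delta}$ is $j_{E_\delta}(\ola)$, and $mc_\delta(q_\rar)$ assigns to it precisely $R_\delta(\ola)$. Combining this with the local repeat point property ensures that no spurious constraints from measures $E_\alpha$ with $\delta \leq \alpha < \lambda$ are introduced during the pullback, and that the reflected condition can be genuinely realized in $N \cap \rad$.
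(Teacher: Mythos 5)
Your overall direction (reflect $p$ to $j_{E_\delta}(p)_{\la mc_\delta(d)\ra}$, decide $\can{\kappa}\in j_{E_\delta}(\name{X})$ in $M_\delta$, pull back through $T$) identifies the right objects, but the step you yourself flag as ``the main obstacle'' is exactly where the argument breaks, and neither the explicit form of $mc_\delta$ nor the local repeat point property repairs it. Unwinding your appeal to the Prikry property in $M_\delta$ via \L o\'{s}, a direct extension $r\leq^* j_{E_\delta}(p)_{\la mc_\delta(d)\ra}$ deciding the statement amounts to a choice, for $E_\delta(d)$--many $\nu$, of direct extensions $r(\nu)\leq^* p_{\la\nu\ra}$ deciding $\nu(\can{\kappa})_0\in\name{X}$ with a fixed truth value. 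To produce the required $q$ you must amalgamate these into a \emph{single} condition with $q_{\la\nu\ra}\leq^* r(\nu)$ on a measure one set, since $j_{E_\delta}(q)_{\la mc_\delta(q_\rar)\ra}$ is precisely $[\nu\mapsto q_{\la\nu\ra}]_{E_\delta(q_\rar)}$. Each $r(\nu)$ has three pieces: a part below $\nu$ (stabilizable, as there are fewer than $\kappa$ possibilities), a part at the block of $\nu$ (these do integrate to $[q_1(\nu)]_{E_\delta(f^*)}$ and pull back through $T$, as you suggest), and a new top part $\la g(\nu),B(\nu)\ra$ living above $\nu$. The last pieces are arbitrary and mutually unrelated for different $\nu$, and a single top function $f^{q_\rar}$ cannot in general directly extend $\kappa$ many unrelated $g(\nu)$'s simultaneously. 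This is not bookkeeping; it is the content of the lemma.

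The paper resolves this by reversing the order of quantifiers. It first fixes an auxiliary model $N^*\elem N$ with $|N^*|=\kappa$ and ${}^{<\kappa}N^*\subset N^*$, and uses the $\kappa^+$--closure of $\leq^*$ to build one function $f^*\leq^* f^p$ and tree $A^*$ such that $f^*_{\la\nu\ra}$ already lies in every dense open subset of $\rad^*$ belonging to $N^*$ below $f^p_{\la\nu\uhr d\ra}$. Only then does it introduce the dense sets $D_{\la\nu\ra}$ of conditions deciding $\nu(\can{\kappa})_0\in\name{X}$; since these lie in $N^*$, the witnessing top part for $\nu$ can be taken to be $f^*_{\la\nu\ra}$ itself, and the remaining data $q_0(\nu),q_1(\nu),B(\nu)$ can then be stabilized, integrated over $E_\delta(f^*)$, and pushed through $T$ exactly as you describe for the middle piece. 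Your proposal is missing this fusion/diagonalization step; without it the pullback of $r$ cannot be realized by any $q\in N\cap\rad$.
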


\begin{proof}
Choose an elementary sub model  $N^* \elem N$ such that $|N^*| = \kappa$, ${}^{<\kappa}N^* \subset N^*$, $N^* \cap \kappa^+ \in \kappa^+$, and $\name{X},p, N \cap \rad \in N^*$. The collection of all $\po_{\E\uhr\delta,\delta}$ dense sets in $N^*$ has cardinality $\kappa$. Using the fact $\leq^*$ for $\rad^*$ is $\kappa^+-$closed we can construct a direct extension $f^* \leq^* f^p$, and an $f^*-$tree $A^*$ such that
\begin{itemize}
\item $p^* = \la f^*,A^*\ra \leq^* p_\rar$,
\item for every $\la \nu \ra \in A^*$ and $D \in N$, if $D$ is dense open (in $\rad^*$) below $f^p_{\vec{\nu}\uhr dom(f^p)}$ then $f^*_{\vec{\nu}} \in D$.
\end{itemize}
Denote $dom(f)$ by $d$ and $\la f^*,A^*\ra$ by $p^*$. The construction of $p^*$ can be carried out inside $N$ so we may assume $p^* \in N$. For every $\la \nu \ra \in A^*$ let
$$ \displaylines{\quad  D_{\la\nu\ra}  = \{ g \leq^* f_{\la \nu \uhr d\ra} \mid \exists q_0,q_1,B, \text{ s.t. } q_1 \leq^* ({p_{\la \nu\ra}}_\lar)_\rar ,   \hfill \cr \hfill \ q_0 \leq^* p^*_\lar, \text{ and }
q_0 \fr q_1 \fr \la g,B\ra \dec \nu(\can{\kappa})_0 \in \name{X} \}. }
$$
Then $D_{\la \nu \ra}$ belongs to $N^*$ since ${p^*_{\la\nu\ra}}_\lar \in V_\kappa \subset N$ and is dense open (in $\rad^*$) below $f_{\la \nu\uhr d\ra}$ by the Prikry condition. Hence $f^*_{\la \nu \ra} \in D_{\la\nu\ra}$. Denote the components $q_0,q_1,B$ which witness $f^*_{\la \nu \ra} \in D_{\la\nu\ra}$ by $q_0(\nu),q_1(\nu),B(\nu)$ respectively. Since $q_0(\nu) \leq p_\lar$, there exists some fixed $q_0^*$ such that the set $\{ \nu \in Lev_0(A^*) \mid q_0^* = q_0(\nu) \} \in E_\delta(f^*)$.

Next define
$$q_1' = [q_1(\nu)]_{E_\delta(f^*)} = j_{E_\delta}(q_1)(mc_\delta(f^*)),$$
then $q_1' \leq^* {j_{E_\delta}(p^*)_{\la mc_\delta(f^*)\ra}}_\lar$.
Setting $q_1^* = T(q_1')$ then $q_1^* \in N \cap \rad$ is a direct extension of $p^*_\rar$.
Define $q^* = q_0^* \fr q_1^*$, then $q^*\leq^* p$. We need to reduce the tree $A^{q_1^*}$.
 Let $q$ be a direct extension of $q^*$ such that for
$A^{q_\rar}_{\la \nu \ra} \uhr dom(f^*) \subset B(\nu \uhr dom(f^*))$ for every $\nu \in Lev_0(A^{q_\rar})$.
Since $q_1'$ was defined via the $E_\delta(f^*)$ ultrapower, there exists a subset $Y \subset Lev_0(A^{q_\rar})$, $Y \in E_\delta(f^q)$ such that for all $\nu \in Y$,

$$q_{\la \nu \ra} \leq^* q_0(\nu) \fr q_1(\nu) \fr \la f^*_{\la \nu\uhr dom(f^*) \ra}, B(\nu\uhr dom(f^*))\ra$$ for every $\nu \in Y$.
\footnote{Note that we cannot in general  take $Y=Lev_0(A^{q_\rar})$, since 
$q_1'$ was defined by the $E_\delta(f^*)$ ultrapower, so the identification of ${{q_\rar}{\la \nu \ra}}_\lar$ with $q_1(\nu)$ may not hold for every $\nu \in Lev_0$ but only on some $E_\delta(f^*)$ set.}
Therefore $$j_{E_\delta}(q)_{mc_\delta(q_\rar)} \dec \can{\kappa} \in j_{E_\delta}(\name{X}).$$
\end{proof}

We are now ready to define the extension $E_\delta(\kappa)$. Let $G_{N} \subset N \cap \rad$ be $V-$generic filter.
\begin{definition}\label{definition - extended measure}
In $V[G_N]$ define $U_\delta \subset \power(\kappa)$ as follows:
For every $X \subset \kappa$ in $V[G_N]$, $X \in  U_\delta$ if and only if there exists some $p \in G_{N}$ such that
$$j_{E_\delta}(p)_{\la mc_\delta(p_\rar)\ra} \force \can{\kappa} \in j_{E_\delta}(\name{X}).$$
\end{definition}

Note that $p \force \name{X} \in \name{U_\delta}$ does not necessary imply that
$j_{E_\delta}(p)_{\la mc_\delta(p_\rar)\ra} \force \can{\kappa} \in j_{E_\delta}(\name{X}).$

\begin{proposition}\label{prop7}
$U_\delta$ is a $\kappa-$complete normal ultrafilter on $\kappa$ in $V[G_N]$. Furthermore it extends $E_\delta(\kappa)$.
\end{proposition}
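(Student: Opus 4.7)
I plan to establish the four properties --- extension of $E_\delta(\kappa)$, ultrafilter, $\kappa$-completeness, and normality --- by using Lemma \ref{lem5} as the main decision tool. The crucial preliminary observation I would record is a \emph{coherent extension property}: whenever $r \leq p$ in $N \cap \rad$, one has $j_{E_\delta}(r)_{\la mc_\delta(r_\rar)\ra} \leq j_{E_\delta}(p)_{\la mc_\delta(p_\rar)\ra}$ in $j_{E_\delta}(\rad)$. This should follow from the fact that $mc_\delta(d) = \{\la j_{E_\delta}(\ola), R_\delta(\ola)\ra \mid \ola \in d\}$ is monotone in $d$, so the top object added to $j_{E_\delta}(r)$ projects onto the top object added to $j_{E_\delta}(p)$.

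The coherent extension property yields well-definedness of $U_\delta$ immediately: if $p,q \in G_N$ were to witness contradictory things at the top (one forcing $\can{\kappa} \in j_{E_\delta}(\name X)$ and the other $\can{\kappa} \notin j_{E_\delta}(\name X)$), then a common refinement $r \in G_N$ would give a single condition $j_{E_\delta}(r)_{\la mc_\delta(r_\rar)\ra}$ deciding both ways. For $E_\delta(\kappa) \subset U_\delta$ I would apply the definition to the trivial condition $\mathbf{1}$ and the check name $\can X$ of some $X \in E_\delta(\kappa)$, observing that $\can\kappa \in j_{E_\delta}(\can X)$ is the check of the true statement $\kappa \in j_{E_\delta}(X)$. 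For the ultrafilter property, given a name $\name X$ with $p \in G_N$ forcing $\name X \subset \can\kappa$, Lemma \ref{lem5} supplies a $\leq^*$-dense set of $q \leq^* p$ whose top decides $\can\kappa \in j_{E_\delta}(\name X)$; genericity of $G_N$ puts such a $q$ in $G_N$, and the coherent extension property ensures its decision pins down membership of $X$ in $U_\delta$.

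For $\kappa$-completeness, given names $\name{X_i}$ for $i < \mu < \kappa$ each forced into $\name{U_\delta}$ by $p \in G_N$, I would iterate Lemma \ref{lem5} along a $\leq^*$-decreasing sequence (using the $\kappa^+$-closure of $\leq^*$ in Merimovich's setting) to produce a single $q \leq^* p$ whose top simultaneously decides $\can\kappa \in j_{E_\delta}(\name{X_i})$ for every $i < \mu$. The coherent extension property forces each of these decisions to be affirmative, for otherwise $q$ would contradict $p \force \name{X_i} \in \name{U_\delta}$. Since $\mu < \kappa = \mathrm{crit}(j_{E_\delta})$, $j_{E_\delta}$ commutes with the intersection, so $j_{E_\delta}(q)_{\la mc_\delta(q_\rar)\ra} \force \can\kappa \in j_{E_\delta}(\bigcap_i \name{X_i})$, yielding $\bigcap_i X_i \in U_\delta$. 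Normality proceeds analogously: a minor variant of Lemma \ref{lem5}, applied to a name $\name F$ forced to be regressive on a set in $\name{U_\delta}$, produces $q \leq^* p$ and $\gamma < \kappa$ with $j_{E_\delta}(q)_{\la mc_\delta(q_\rar)\ra} \force j_{E_\delta}(\name F)(\can\kappa) = \can\gamma$, whence $q$ forces $\{\alpha \mid \name F(\alpha) = \can\gamma\}$ into $\name{U_\delta}$.

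The main obstacle I anticipate is verifying the coherent extension property rigorously inside Merimovich's framework, in particular checking that the ``top projections'' behave well with respect to the tree components $A^{p_\rar}$ and the measure-one restrictions demanded by the forcing. A secondary technical point is the iteration in the $\kappa$-completeness step, where the $\kappa^+$-closure of $\leq^*$ has to mesh with the specific form of the $\leq^*$-extensions delivered by Lemma \ref{lem5} so that all $\mu$ decisions can be merged into one direct extension.
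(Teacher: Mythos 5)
Your skeleton --- the coherence of the assignments $p \mapsto j_{E_\delta}(p)_{\la mc_\delta(p_\rar)\ra}$ under extension, Lemma \ref{lem5} as the decision tool, and a density-plus-genericity argument for the ultrafilter property --- is exactly the engine of the paper's proof (the coherence property is what the paper uses implicitly when it observes that a condition forcing $\name{X} \in \name{U_\delta}$ whose image decides $\can{\kappa} \in j_{E_\delta}(\name{X})$ must decide it positively). However, there is a genuine gap in your completeness/normality step. You propose to iterate Lemma \ref{lem5} $\mu$ times ``using the $\kappa^+$-closure of $\leq^*$,'' but $\leq^*$ on all of $\rad$ is \emph{not} $\kappa^+$-closed: only the top-part order $\leq^*$ on $\rad^*$ is. Lemma \ref{lem5} returns a direct extension $q \leq^* p$ that in general strengthens the lower part $p_\lar$ as well (in its proof, $q_0^*\leq^* p^*_\lar$), and the direct extension order on a lower block with critical point $\nu < \kappa$ is only about $\nu^+$-closed, so your iteration breaks down already for $\mu$ of moderate size. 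The paper's proof contains the device that repairs this and which your plan is missing: since there are fewer than $\kappa$ possible lower parts (all in $V_\kappa$), one enumerates them as $\la r_i \mid i < \tau\ra$, builds a $\leq^*$-decreasing sequence of \emph{top parts} $t_i$ handling each $r_i$ in turn, and thereby obtains $p^* \leq^* p$ with $p^*_\lar = p_\lar$ whose image already forces $\can{\kappa} \in j_{E_\delta}(\name{X})$. Only after the lower part has been frozen in this way can the $\kappa^+$-closure of the top-part order be used to merge the decisions.

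The second gap is in normality. Deciding the value of $j_{E_\delta}(\name{F})(\can{\kappa})$ among $\kappa$ many candidates is not a ``minor variant'' of Lemma \ref{lem5}, which decides a single statement; it is equivalent to closure of $U_\delta$ under diagonal intersections of $\kappa$ many sets, and for that a fusion over the tree is unavoidable. The paper does this explicitly: given conditions $p_i$ handling $\name{X}_i$ for $i<\kappa$ (with a common lower part, by the device above), it forms $f^* = \bigcup_{i<\kappa} f^{p_i}$ and the tree $A^*$ whose first level consists of those $\nu$ with $\nu\uhr dom(f^{p_i}) \in A^{p_{i\rar}}$ for all $i < \nu(\olk)_0$, with $A^*_{\la\nu\ra} = \bigcap_{i<\nu(\olk)_0} A_i$; the resulting condition forces $\Delta_{i<\kappa}X_i \in U_\delta$, which yields normality and $\kappa$-completeness simultaneously. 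Your plan should be reorganized around this diagonal construction rather than around a transfinite iteration of Lemma \ref{lem5}.
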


\begin{proof}
We start by verifying that for every $p \in N \cap \rad $ with $p \force \name{X} \in \name{U_\delta}$, there is
a direct extension $p^* \leq^* p$ such that $p^*_\lar = p _\lar$ and $$j_{E_\delta}(p^*)_{\la mc_\delta(p^*_\rar) \ra} \force \can{\kappa} \in j_{E_\delta}(\name{X}).$$
First note that if $p \force \name{X} \in \name{U_\delta}$ and $j_{E_\delta}(p)_{\la mc_\delta(p) \ra} \dec \can{\kappa} \in j_{E_\delta}(\name{X})$ then $j_{E_\delta}(p)_{\la mc_\delta(p) \ra}$ must force $``\can{\kappa} \in j_{E_\delta}(\name{X})"$. Note that the number of possible extensions $p_\lar$ is less than $\kappa$. Let $\la r_i \mid i < \tau\ra$ be an enumeration these extensions, then we can construct an $\leq^*$-decreasing sequence of extensions $\la t_i \mid  i < \tau \ra$ stronger than $p_\rar$ such that for each $i < \tau$, if there exists some $t \leq^* t_{i\rar}$ such that for $p' = r_i \fr t$, $$j_{E_\delta}(p')_{\la mc_\delta(p') \ra} \force \can{\kappa} \in j_{E_\delta}(\name{X})$$
then  $t_{i+1}$ is one of such $t$. Otherwise, $t_{i+1} = t_i$.  Let $p^*$ be an extension of $p$ such that $p^*_\lar = p_\lar$ and $p^*_\rar$ is a common direct extension of all $\la t_i \mid i < \tau\ra$. It is easily seen that $p^*$ meets our requirements, by Lemma \ref{lem5}.

\noindent We can now show that $U_\delta$ is a normal $\kappa-$complete ultrafilter on $\kappa$.
$U_\delta$ clearly extends $E_\delta(\kappa)$.
If $X \in U_\delta$ and $X \subset Y \subset \kappa$, then for some suitable names $\name{X},\name{Y}$ there exists some $p \in G_N$ such that $p \force \name{X} \subset \name{Y}$ and $$j_{E_\delta}(p)_{\la mc_\delta(p_\rar)\ra} \force \can{\kappa} \in j_{E_\delta}(\name{X}).$$
Hence
$$j_{E_\delta}(p)_{\la mc_\delta(p_\rar)\ra} \force \can{\kappa} \in j_{E_\delta}(\name{Y}).$$
Next, let $p \in G_{N}$ be a condition forcing $``\la \name{X}_i \mid i < \kappa \ra \subset \name{U_\delta}"$. Using the observation in the beginning of this proof, we can construct a decreasing sequence of direct extensions below $p_\rar$ such that
$$j_{E_\delta}(p_i)_{mc_\delta(p_i)} \force \can{\kappa} \in j_{E_\delta}(\name{X}).$$
Now set $f^* = \bigcup_{i<\kappa}f^{p_i}$ and construct an $f^*-$tree $A^*$:
First set
$$Lev_0(A^*) = \{ \nu \in OB(f^*) \mid \forall i < \nu(\olk)_0 \quad \nu\uhr dom(f^{p_i}) \in A^{p_{i\rar}}\},$$
then, for every $i < \kappa$ define
$$A_i = \{ \vec{\nu} \in OB(f^*)^{<\omega} \mid \vec{\nu}\uhr dom(f^{p_i}) \in A^{p_{i\rar}}\}$$
and set
$$A^*_{\la \nu \ra} = \bigcap_{i < \nu(\olk)_0} A_i.$$
Then $p^* = p_\lar \fr \la f^* , A^* \ra$ force $\Delta_{i < \kappa} X_i \in U_\delta$.
\end{proof}

\begin{remark}
We point out the differences  between the local repeat point of $\E$ and (global) repeat point as defined in Section 5 of \cite{carmi}. If $\delta < o(\E)$ is a (global) repeat point of $\E$ then all the normal measures $\{ E_\gamma(\kappa) \mid \delta \leq \gamma < o(\E)\}$ extend to measures in the $\rad-$generic extension. Here in the local case, the argument is given for sub-forcing extensions $V^{\po_{\E\uhr\delta,\delta}}$, and $E_\delta(\kappa)$ is the only normal measure between those appearing in $\E$ which extends. For suitable $\delta' > \delta$ (that is $\delta' = N' \cap \rad$ for an appropriate structure $N'$) one needs to force with $\po_{\E\uhr\delta',\delta'}/\po_{\E\uhr\delta,\delta}$ over $V^{\po_{\E\uhr\delta,\delta}}$ in order to extend $E_{\delta'}(\kappa)$. 
In the next section we shall prove that forcing with $\po_{\E\uhr\delta',\delta'}/\po_{\E\uhr\delta,\delta}$ over $V^{\po_{\E\uhr\delta,\delta}}$ adds a generating set $k_\delta$ to the extension $U_\delta$
 of $E_\delta(\kappa)$. We can therefore show that none of the measures $\{E_\alpha(\kappa) \mid \alpha < o(\E)\}$ extends in the final model $V^{\rad}$.
Therefore the forcing $\rad$ can be thought of as an iteration of adding generating sets to measures. As we will see in the next section, the generating sets $k_\delta$ can be seen to come from differences between generic Magidor-Radin clubs. Under this interpretation $\rad$ ``hides" an iteration of club shooting.
\end{remark}

\subsection{Adding a generating set to $U_\delta$}
We now address the  forcing $\rad/N \cap \rad$. We shall prove that this forcing adds a generating set $k_\delta$ to the $V^{N \cap \rad}$ measure $U_\delta$ defined above. We need to add preliminary notations in order to define and work with the sets $k_\delta$.

Let $G \subset \rad$ be a $V-$generic set. For every $\alpha < \lambda$  the following sets where defined in \cite{carmi}:
\begin{itemize}
\item $G^{\alpha} = \bigcup\{f^{p_\rar}(\ola) \mid p \in G, \ola \in dom(f^{p_\rar}) \}$,
\item $C^{\alpha} = \{ \ol{\nu}_0 \mid \ol{\nu} \in G^{\ola} \}\subseteq \kappa$.
\end{itemize}
$C^{\kappa}$ is the well known Radin club associated with the Mitchell increasing sequence of measures $\{ E_\beta(\kappa) \mid \beta < \lambda\}$. The other generic sets $C^{\alpha}$, $\kappa < \alpha < \lambda$ are not clubs
and correspond to $\{ E_\beta(\alpha) \mid \beta < \lambda, \alpha< length(E_\beta)\}$. They can be associated with a filtration of $C^\kappa$ by clubs.

\begin{ddefinition}\label{definition - preliminary generic data}
Let $\tau \in C^{\kappa}$ be an ordinal in the generic Magidor-Radin club. Then there are $p\in G$ and $\nu \in Lev_0(A^{p_\rar})$ such that $\tau = \nu(\olk)_0$ and $p_{\la \nu \ra} \in G$.  Define
\begin{itemize}
\item $o^G(\tau) = o(\nu(\olk))$, for any (every) $p, \nu$ as above.
\\Recall that $\nu(\olk)=\l \tau,e_0,...,e_\xi,...\r$ ($\xi<\mu$) and $\mu$ is called $o(\nu(\olk))$, i.e. the
length of the sequence of extenders on $\tau$.
\item If $\alpha < \lambda$ is an ordinal for which $\ola \in dom(\nu)$ for some $p,\nu$ as above, then define $f_\alpha(\tau) = \nu(\ola)_0$.
\\Namely, it is the ordinal which corresponds to $\alpha$ ($C_\alpha$) over the level $\tau$.
\end{itemize}
\end{ddefinition}

\begin{ddefinition} For every $\alpha < \lambda$ set
$$k_\alpha = \{ \tau \in C^{\olk} \mid \tau \in dom(f_\alpha) \text{ and } o^G(\tau) = f_\alpha(\tau) \}.$$
\end{ddefinition}

\begin{remark} Let $p \in \rad$ and $\alpha < \lambda$ for which $\ola \in dom(f^{p_\rar})$. By examining the definition of \cite{carmi} the following can be easily verified:
\begin{enumerate}
\item $\{ \la \nu \ra \in A^{p_\rar} \mid \ola \in dom(\nu) \} \in E_\gamma(p_\rar)$ for every $\gamma \geq \alpha$.

\item Let
$$W_\alpha = \{ \nu \mid \nu \in OB(d) \text{ for some } d, \ola \in dom(\nu), \text{ and } \nu(\ola)_0 = o(\nu(\olk))\},$$ then
$$Lev_0(A^{p_\rar}) \cap W_\alpha \in E_\alpha(f^{p_\rar}) \setminus \bigcap_{\beta \neq \alpha}E_\beta(f^{p_\rar}).\footnote{see also \cite{carmi}, definition 4.9 and  $X_=,X_>,X_<$}
$$

\item $p \force \name{k_\alpha} \subset^* \{\nu(\olk)_0 \mid \nu \in Lev_0(A^{p_\rar}) \cap W_\alpha\}$. \footnote{Here  $Y \subset^* Y'$ if $Y' \setminus Y$ is bounded in $\kappa$.}

\end{enumerate}
\end{remark}

${}$ \newline
Let $N \elem H_\theta$, and $\delta = N \cap \lambda$ be as in the previous section. By lemma \ref{lemma - subforcing}, the set $G_{N} = G \cap N$ is a $V-$generic for $N \cap \rad$.
Let $U_\delta$ be the measure on $\kappa$ extending $E_\delta(\kappa)$ in $V[G_N]$. We claim

\begin{proposition}\label{proposition - generating sets}
$k_\delta$ is a $U_{\delta}-$generating set, i.e. for every $X \subset \kappa$ in $V[G_{N}]$ if $X \in U_\delta$ then $k_\delta \subset^* X$ in $V[G]$.
\end{proposition}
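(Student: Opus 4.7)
The plan is to convert the ultrapower-level defining property of $U_\delta$ into a pointwise forcing statement along $Lev_0$ of a witnessing condition $p\in G_N$, and then to pass to a stronger $q^*\in G$ whose tree has been shrunk so that every $\nu\in Lev_0\cap W_\delta$ forces $\nu(\olk)_0\in\name{X}$. By Remark~(3) the zero-coordinates of such $\nu$ exhaust the tail of $k_\delta$ added by that condition, which will yield $k_\delta\subset^* X$ in $V[G]$.

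Given $X\in U_\delta$ with a nice $(N\cap\rad)$-name $\name{X}\in N$, first use Proposition~\ref{prop7} together with the direct-extension refinement from the start of its proof to fix $p\in G_N$ with
$$j_{E_\delta}(p)_{\la mc_\delta(p_\rar)\ra}\force\can{\kappa}\in j_{E_\delta}(\name{X}).$$
By ultrapower reflection for $E_\delta$, applied to the function $\nu\mapsto p_{\la\nu\ra}$ evaluated at $mc_\delta(f^{p_\rar})$, this is equivalent to
$$S:=\{\nu\in Lev_0(A^{p_\rar})\mid p_{\la\nu\ra}\force \nu(\olk)_0\in\name{X}\}\in E_\delta(f^{p_\rar}).$$
Note that $p\in N$ forces $\old\notin dom(f^{p_\rar})$; nevertheless, by the coherence identity $mc_\delta(d')\uhr j_{E_\delta}(d)=mc_\delta(d)$ for $d\subseteq d'$, for any $r\leq p$ with $\old\in dom(f^{r_\rar})$ the restriction map $\nu\mapsto\nu\uhr dom(f^{p_\rar})$ projects $E_\delta(f^{r_\rar})$ onto $E_\delta(f^{p_\rar})$, so
$$S^*_r:=\{\nu\in Lev_0(A^{r_\rar})\mid \nu\uhr dom(f^{p_\rar})\in S\}\in E_\delta(f^{r_\rar}).$$

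Now consider the set $D$ of conditions $r\leq p$ in $\rad$ satisfying (i) $\old\in dom(f^{r_\rar})$ and (ii) $Lev_0(A^{r_\rar})\cap W_\delta\subseteq S^*_r$. The set $D$ is dense below $p$: given any $r\leq p$, use domain-extension density to obtain $r'\leq r$ with (i), then directly shrink $Lev_0(A^{r'_\rar})$ to $(Lev_0(A^{r'_\rar})\setminus W_\delta)\cup S^*_{r'}$. This shrunk tree is legitimate because a direct ultrapower check on $mc_\alpha$ yields $W_\delta\notin E_\alpha(f^{r'_\rar})$ for every $\alpha\neq\delta$ (so $Lev_0\setminus W_\delta\in E_\alpha$), while $S^*_{r'}\in E_\delta(f^{r'_\rar})$ handles $\alpha=\delta$. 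By genericity, fix $q^*\in D\cap G$.

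By Remark~(3), $q^*\force k_\delta\subset^*\{\nu(\olk)_0\mid\nu\in Lev_0(A^{q^*_\rar})\cap W_\delta\}$. For every such $\nu$, property (ii) gives $\nu\uhr dom(f^{p_\rar})\in S$, hence $q^*_{\la\nu\ra}\leq p_{\la\nu\uhr dom(f^{p_\rar})\ra}\force\nu(\olk)_0\in\name{X}$, so $\nu(\olk)_0\in X$. Consequently $k_\delta\setminus X$ is bounded in $\kappa$, giving $k_\delta\subset^* X$ in $V[G]$. The main obstacle is pinning down exactly what Merimovich's tree condition requires and certifying that the shrinking meets it; this reduces to (a) the pointwise coherence of $mc_\delta$ under domain restriction and (b) the identification $W_\delta\in E_\alpha\Leftrightarrow\alpha=\delta$, obtained by unpacking $mc_\alpha(\olk)$ and $mc_\alpha(\old)$.
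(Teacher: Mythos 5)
Your proposal is correct and follows essentially the same route as the paper: reflect $j_{E_\delta}(p)_{\la mc_\delta(p_\rar)\ra}\force\can{\kappa}\in j_{E_\delta}(\name{X})$ to an $E_\delta$-large set of $\nu$ with $p_{\la\nu\ra}\force\nu(\olk)_0\in\name{X}$, add $\old$ to the domain, shrink $Lev_0$ to $(Lev_0\setminus W_\delta)\cup(W_\delta\cap X^*)$, and finish via Remark~(3). Your only addition is wrapping the construction in an explicit density-below-$p$ argument to land a witnessing condition in $G$, which the paper leaves implicit but is the same idea.
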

\begin{proof}
Let $\name{X}$ be a name for $X$ in $V[G_{N}]$ and let $p \in G_N$ such that $p \force_{\rad\cap N^*} \name{X} \in \name{U_\delta}$. Let us show that $p$ has an extension $q \leq p$, $q \force_{\rad} \name{k}_\delta \subset^* \name{X}$.
By 5.8 in \cite{carmi} or by the argument of Proposition \ref{prop7}, we may assume  $$j_{E_\delta}(p)_{\la mc_\delta(p_\rar)\ra} \force \can{\kappa} \in j_{E_\delta}(\name{X}).$$
Define
$$X' = \{ \nu  \in Lev_0(A^{p_\rar}) \mid p_{\la \nu \ra} \force \can{\nu(\olk)_0} \in \name{X} \}.$$
Then $X' \in E_\delta(p_\rar)$. Let $p^*$ be a direct extension of $p$ in $\rad$ obtained by adding $\old$ to $dom(f^{p_\rar})$.\\Set
$$X^* = \{\nu  \in Lev_0(A^{p^*_\rar}) \mid p^*_{\la \nu \ra} \force \can{\nu(\olk)_0} \in \name{X} \}.$$
Clearly $X^* \in E_\delta(p^*_\rar)$. Let $p^{**}$ be the strong Prikry extension of $p^*$ obtained by reducing $Lev_0(A^{p^*})$ to $\left(Lev_0(A^{p^*}) \setminus W_\delta \right) \cup \left( Lev_0(A^{p^*}) \cap W_\delta \cap X^* \right)$. We claim that $p^{**} \force \name{k}_\delta \subset^* \name{X}$. Since
$p^{**} \force \name{k_\delta} \subset^* \{\nu(\olk)_0 \mid \nu \in Lev_0(A^{p_\rar}) \cap W_\delta\}$,
it is sufficient to consider elements $\nu(\olk)_0$ for $\nu \in Lev_0(A^{p^{**}})$.
Let $q \geq p^{**}$  such that $q \force \nu(\olk)_0 \in \name{k_\delta}$, then $q \force p^{**}_{\la \nu \ra} \in \name{G}$, but $p^{**}_{\la \nu \ra} \force \nu(\olk)_0 \in \name{X}$ as $\nu \in X^{*}$.
\end{proof}

We can now deduce the main result of this section.
\begin{theorem}
For every $V-$generic filter $G \subset \rad$,
$\kappa$ is a regular and $s(\kappa) = \lambda$ in $V[G]$.
\end{theorem}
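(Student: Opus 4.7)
The plan is to assemble the machinery built up in this section into the three required conclusions: $\kappa$ is regular in $V[G]$, $s(\kappa) \leq \lambda$, and $s(\kappa) \geq \lambda$.

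For the first two items, I would invoke the standard analysis of extender-based Magidor--Radin forcing carried out in \cite{carmi}: since $o(\E) = \lambda \geq \kappa^{++}$, the generic club $C^{\olk}$ at $\kappa$ is closed unbounded in $\kappa$ and consists of ground-model inaccessibles, so $\kappa$ remains regular (indeed inaccessible) in $V[G]$. For the upper bound, $\rad$ has cardinality $\lambda$ and satisfies the $\kappa^{+}$-c.c., so under $\GCH$ a standard nice-name count yields $2^\kappa \leq \lambda^\kappa = \lambda$ in $V[G]$, whence $s(\kappa) \leq 2^\kappa \leq \lambda$.

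The substance of the argument is the lower bound $s(\kappa) \geq \lambda$. Given a family $F = \{X_i \mid i < \tau\} \subset [\kappa]^{\kappa}$ in $V[G]$ with $\tau < \lambda$, fix nice $\rad$-names $\name{X}_i$ for its members. Using the hypothesis that $\lambda$ is not the successor of a singular cardinal of cofinality $\leq \kappa$, together with $\GCH$, pick an elementary substructure $N \elem H_\theta$ satisfying the standing bulleted conditions of the previous subsections (in particular $|N| < \lambda$, ${}^\kappa N \subset N$, and $N \cap \lambda = \delta$) and containing $\{\name{X}_i \mid i < \tau\}$. By Lemma \ref{lemma - subforcing} and the isomorphism $N \cap \rad \cong \po_{\E\uhr\delta,\delta}$ established above, $G_N := G \cap N$ is $V$-generic for this sub-forcing and $F \in V[G_N]$. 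By Proposition \ref{prop7}, $U_\delta$ is a $\kappa$-complete ultrafilter on $\kappa$ in $V[G_N]$, so for each $i < \tau$ exactly one of $X_i$ and $\kappa \setminus X_i$ belongs to $U_\delta$. By Proposition \ref{proposition - generating sets}, $k_\delta$ is a $U_\delta$-generating set in $V[G]$, so in either case $k_\delta \subset^* X_i$ or $k_\delta \subset^* \kappa \setminus X_i$; thus $X_i$ fails to split $k_\delta$, and $F$ is not a splitting family in $V[G]$.

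The one step still requiring attention is to verify that $k_\delta$ itself has cardinality $\kappa$ in $V[G]$. I would handle this by a density argument based on the second item of the remark preceding Proposition \ref{proposition - generating sets}: given any $p \in \rad$ and any $\beta < \kappa$, first extend $p$ so that $\old \in dom(f^{p_\rar})$, then use $Lev_0(A^{p_\rar}) \cap W_\delta \in E_\delta(f^{p_\rar})$ to choose $\nu$ with $\nu(\olk)_0 > \beta$ in this set; the resulting extension forces an element above $\beta$ into $\name{k}_\delta$. Combined with the regularity of $\kappa$ in $V[G]$, this yields $|k_\delta| = \kappa$. I expect this bookkeeping to be the one nontrivial point of the proof; once it is in place, the three items above assemble immediately into the theorem.
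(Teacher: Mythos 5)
Your proposal is correct and follows essentially the same route as the paper: reduce a small family $F$ to an $N\cap\rad$-name via the chain condition, pass to $V[G_N]$ where $U_\delta$ is a $\kappa$-complete ultrafilter, and use Proposition \ref{proposition - generating sets} to see that $k_\delta$ is not split by any member of $F$. Your explicit density check that $|k_\delta|=\kappa$ is a point the paper leaves implicit (it follows from item 2 of the remark preceding Proposition \ref{proposition - generating sets} together with the regularity of $\kappa$ in $V[G]$), and is handled correctly; note only that the paper quotes the $\kappa^{++}$-c.c.\ rather than the $\kappa^{+}$-c.c., which does not affect your nice-name count under $\GCH$.
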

\begin{proof}
The fact $\kappa$ remains regular and $2^\kappa \leq \lambda$ in $V[G]$ is obtained in \cite{carmi}. Let $F \subset [\kappa]^\kappa$ be a family of size $|F| < \lambda$. Since no cardinals are collapsed in $\rad$ we can form a sequence of  names $\{ \name{X}_i \mid i < \eta \}$ for some $\eta < \lambda$ which are interpreted as $F$ in $V[G]$. Take an elementary submodel $N \elem H_\theta$ as described at the beginning of this section, such that $\{ \name{X}_i \mid i < \eta \} \subset N$, and denote $\delta = N \cap \lambda$.

Since $\rad$ satisfies $\kappa^{++}-c.c.$ and $\kappa^+ \subset N$, we may assume that  $\name{X}_i$ are $N \cap \rad$ names, so $X_i \in V[G \cap N]$ for every $i < \eta$.
$U_\delta$ is a measure on $\kappa$ in $V[G\cap N]$ and by Proposition \ref{proposition - generating sets}, $k_\delta \in V[G]$ is $U_\delta$ generating set. If follows that none of the sets $X_i$ splits $k_\delta$ hence $F$ cannot be a splitting family in $V[G]$. We conclude that $s(\kappa) = \lambda$.
\end{proof}

\section{From $s(\kappa) = \lambda$ to $o(\kappa) = \lambda$}\label{section - inner model}

We prove Theorem \ref{theorem - inner model}, which shows that the initial assumption $o(\kappa) = \lambda$ of our previous construction is the optimal one.
This generalizes the argument of \cite{Zapletal} which proves that $s(\kappa) = \kappa^{++}$ implies that there exists an inner model with a measurable cardinal $\alpha$ such that $o(\alpha) = \alpha^{++}$.
The argument of \cite{Zapletal} relies on the structure of the core model $\K$ below $o(\alpha) = \alpha^{++}$ which is a core model for a sequence of measures. The fact that all extenders are equivalent to measures is required in \cite{Zapletal} to argue that when $U$ is the normal measures derived from an  elementary embedding  $i : \K \to \K^*$, then $U$ does not belong to $\K^*$.
Unfortunately, this property fails in core models for sequences of extenders. The argument suggested here appeals to the ideas of \cite{gitik - extender}.

\begin{proof}{(Theorem \ref{theorem - inner model})}\\

We prove the Theorem in \textbf{two steps}: We first prove the result assuming
$\lambda$ is an $\omega_1$ closed cardinal (that is, $\alpha^{\omega_1} < \lambda$ for each $\alpha < \lambda$),
and then, use this result to show that if $\lambda > \kappa^+$ is not a successor of a singular cardinal of
cofinality $\leq \omega_1$, then $\lambda$ must be $\omega_1$ closed. \\

\textbf{Step I: \vspace{0.5pt}} Suppose that $\lambda$ is an $\omega_1$ closed cardinal. Assuming $\neg 0^{\P}$, we claim
that that $s(\kappa) \geq \lambda$ implies that $o(\kappa) = \lambda$ in the core model $\K = J^E$. 
Suppose otherwise. Since $o^{\K}(\kappa) < \lambda$, there exists some $\eta < \lambda$ such that $J_\eta^E$ is a mouse which includes all extenders on the sequence $E$ with critical point $\kappa$. 
Let us say that an iteration $i : J_\eta^E \to Y$ is mild in $\kappa$ if
$\kappa = crit(i)$ and no extender of $E \uhr\eta$ is used more than $\omega_1$ many times along this iteration.  Since $\lambda$ is a regular cardinal the set
\[ \{i(\kappa) \mid i  : J_\eta^E \to Y, \text{ is mild in } \kappa \}\]
is bounded by some $\tau < \lambda$.
Since $\kappa$ us inaccessible and $\lambda$ is $\omega_1$ closed, we can choose a sufficiently large regular cardinal $\theta > \lambda$, and find $N \elem H_\theta$ which satisfies:
\begin{enumerate}
\item $\tau + 1 \subset N$,
\item $|N| < \lambda$,
\item ${}^{\omega_1}N\subset N$,
\item $J_\eta^E \subset N$,
\item $\lambda \in N$.
\end{enumerate}
Let $N_0$ be the transitive collapse of $N$, then all but the last property are valid in $N_0$, and
\[N_0 \models \kappa \text{ is inaccessible and }2^\kappa > \tau.\]
Since $|\power(\kappa) \cap N_0| < \lambda$, this set cannot be a splitting family. Let $a \in \power(\kappa)$ be a witness, that is either $|a \setminus x| < \kappa$ or $|a \cap x| < \kappa$ for every $x \in \power(\kappa) \cap N_0$.
The induced $U_a = \{ x \in  \power(\kappa) \cap N_0 \mid |a \setminus x| < \kappa\}$ is a $\kappa$ complete nonprincipal $N_0-$ultrafilter.
Hence the structure $Ult(N_0,U_a) = (N_0 \cap {}^\kappa N_0)/U_a$ is well founded. Denote its transitive collapse by $N^*$. $N_0$ satisfies sufficient fraction of set theory to apply  
\L o\'{s} theorem and obtain an elementary embedding $i^* : N_0 \to N^*$.
We conclude
\begin{enumerate}
\item $V_\kappa^{\K} = V_\kappa \cap J_\eta^E \subset N^*$,
\item $N^* \models i^*(\kappa) > 2^\kappa > \tau$,
\item ${}^{\omega_1 }N^* \subset N^*$.
\end{enumerate}
We now appeal to inner model theory, as presented in \cite{Zeman}, and to the proof of Theorem 1.1 in \cite{gitik - extender}.
First, both $N_0$ and $N^*$ satisfy sufficient fraction of set theory to define their core models $\K(N_0)$, and $\K(N^*)$, and prove the appropriate covering Lemma used
in \cite{gitik - extender}. Let $i = i^*\uhr \K(N_0)$, then the definability of the core model implies that $i: \K(N_0) \to \K(N)$ is elementary.
Since $J_\eta^E \in  N_0$, it follows that $J_\eta^E = J_{\eta}^{E^{N_0}}$, that is $J_\eta^E$ is an initial segment of $\K(N_0)$. Furthermore, as $\kappa+1 \subset N$, then
$N$ witness that $cp(E_\alpha) > \kappa$ for all $\alpha \geq \eta$. Hence, the same is true in $\K(N_0)$.

We claim that there exists a normal iteration $\pi : \K(N_0) \to K'$ of $\K(N_0)$,  obtained by ultrapowers via extenders which originate in $J_\eta^E$, and that
$i = k \circ \pi$, where $k : \K' \to \K(N^*)$ satisfies $cp(k) > \pi(\kappa)$.
For this, consider the coiteration of $\K(N_0)$ with $\K(N^*)$.
Let us denote $\K(N_0)$ by $\K^0$, $\K(N^*)$ by $\K^*$, their coiterands by $\K^0_i$ and $\K^*_i$ respectively, and their iteration maps by $\pi^{\K_0}_{i,j}$ and $\pi^{\K^*}_{i,j}$, 
for $i<j$ below the length of the coiteration.
$J_\eta^E = \K^0\uhr\eta$ is an initial segment of the core model $\K$ (i.e. it is incompressible). 
The arguments of sections 7.4,8.3 of \cite{Zeman} imply that $\K^*$ does not move along in an initial segement of the coiteration,  as long as the first point of disagreement between $\K^0_i$ and $\K^*$ is below $\pi^{\K_0}_{0,i}(\eta)$.
It follows that if $\theta$ is the first index $i$ of the coiteration  in which $\K^0_i$ and $\K^*_i$ agree above $\pi^{\K^0}_i(\kappa)+1$, then $\K^*_i = \K^*$. 
We set $\K' = \K^0_\theta$ and $\pi = \pi^{\K_0}_\theta : \K(N_0) \to \K'$. 

Hence, every  $x \in \K'$ is of the form $\pi(f)(\xi_1,..,\xi_n)$, where $n < \omega$,  $f : \kappa^n \to \K(N_0)$ is a function in $\K(N_0)$, and
$\xi_1,..,\xi_n \leq \pi(\kappa)$. 
We then define $k : \K' \to \K(N^*)$ by sending $\pi(f)(\xi_1,..,\xi_n)$ as above, to $i(f)(\xi_1,..,\xi_n)$. It is standard to verify that
$k$ is well define, elementary, and $cp(k) > \pi(\kappa)$.
Finally, as ${}^{<\kappa }N^* \subset N^*$, we can apply the proof of Theorem 1.1 in \cite{gitik - extender} to $N^*$ with respect to  $\K(N^*)\uhr \pi(\kappa) = \K'\uhr\pi(\kappa)$, and $\pi$.
We conclude that $\pi\uhr J_\eta^E : J_\eta^E \to \pi(J_\eta^E)$ is mild. However this is absurd as $cp(k) > \pi(\kappa)$ so 
$\pi(\kappa) = k\circ \pi(\kappa) =  i(\kappa) = i^*(\kappa) > \tau$,
contradicting the choice of $\tau$. \\

\textbf{Step II: \vspace{0.5pt}} We now show that $\lambda > \kappa^+$ must be $\omega_1$ closed. 
Suppose otherwise, and let $\alpha < \lambda$ be the minimal cardinal such that
$\alpha^{\omega_1} \geq \lambda$. Clearly $\alpha$ must be a singular cardinal with $o(\alpha) \leq \omega_1$.
Since $s(\kappa) \geq \kappa^+$, $\kappa$ is strongly inaccessible and therefore $\alpha > \kappa$. 
It follows that the singular cardinal hypothesis fails at $\alpha$ (that is, $\alpha^{\cf(\alpha)} > \alpha^+ + 2^{\cf(\alpha)}$). Let $\beta \leq \alpha$ be the first singular cardinal above $\kappa$ in which the singular cardinal
hypothesis fails. By Silver's Theorem we have $\cf(\beta) = \omega$, therefore $\beta^\omega > \beta^+$.
We claim that the last implies that there are unbounded many $\gamma < \beta$ which are measurable cardinals
in $\K$. This will suffice to establish a contradiction since each $(\gamma^+)^V$
is $\omega_1$ closed by the minimality of $\beta > (\gamma^+)^V$, and as $s(\kappa) \geq (\gamma^+)^V$,
we conclude from the result above that $o^{\K}(\kappa) \geq (\gamma^+)^V \geq (\gamma^+)$. 
It follows that $\K$ contains an extender on $\kappa$ which overlaps a measure on $\gamma$, contradicting $\neg 0^{\P}$.
Finally, in order to show that $\beta$ is a limit of $\K-$measurable cardinals, we appeal to Shelah's $\pcf$ theory, and to Gitik's results on the failure of the singular cardinal hypothesis.
By Theorem A in Section 2 of \cite{Gitik - neg SCH}, it is sufficient to verify that $\pp(\beta) > \beta^+$.
To this end, we note that $\cov(\beta,\beta,\omega_1,2) > \beta^+$. Indeed if
$X \subset \power_\beta(\beta)$ is a covering family of $\power_{\omega_1}(\beta)$
then $|X| > \beta^+$. Otherwise, $\beta^\omega > \beta^+$ would imply
that there are $\beta^{++}$ many different sets in $\power_{\omega_1}(\beta)$ which are covered
by the same set $z \in X$. This is impossible since $|z| < \beta$, and $\power_{\omega_1}(|z|) < \beta$
by our choice of $\beta$. Next, by \cite{Shelah - Fur Car Arthm} (see also $(e)$ in page 446 of \cite{Shelah - Card Arthm})
we either have $\pp(\beta) = \cov(\beta,\beta,\omega_1,2) > \beta^+$, or $\pp(\beta) > \beta^+$. 
Either way, we get $\pp(\beta) > \beta^+$. 
\end{proof}

\subsection*{Open Questions}

Let us conclude with some questions:\\

{\bf Question 1}. What is the consistency strength of $s(\kappa) = \kappa^{+\omega+1}$?\\
The authors believe to have found a modification of the forcing argument in Section \ref{section - forcing}
which shows that the consistency strength is not greater than $o(\kappa) = \kappa^{+\omega+1}$, however it is not clear whether this is optimal.\\

{\bf Question 2}. What is the consistency strength of the statement that $\kappa$ is a measurable and $s(\kappa)=\kappa^{++}$?\\
In the model of Kamo, $\kappa$ remains a measurable (and even a supercompact).\\

{\bf Question 3}. Is it possible to have GCH below $\kappa$ and $s(\kappa)=\kappa^{+3}$?\\
Note that in our model for $s(\kappa)=\kappa^{+3}$, $2^\alpha=\alpha^{++}$ holds on a club below $\kappa$.\\

{\bf Question 4}. Is it possible $s(\kappa)=\lambda$ for a singular $\lambda$? \\
Note that this is known for $\kappa = \aleph_0$.

\raggedright


\begin{thebibliography}{xxx}

\bibitem{carmi}
Carmi Merimovich
\newblock{\em Extender based Magidor-Radin forcing},
\newblock Israel Journal of Mathematics,
volume 182 (2011),
pp. 439-480.

\bibitem{Zapletal}
Jindřich Zapletal
\newblock{\em Splitting number at uncountable cardinals},
\newblock Journal Symbolic Logic,
volume 62 (1997),
pp. 35-42.


\bibitem{suzuki}
Toshio Suzuki
\newblock{\em About splitting numbers},
\newblock Proceedings of the Japan Academy Series A Mathematical Sciences,
 volume 74(2) (1998),
pp. 33-35.


\bibitem{radin}
Lon Berk Radin
\newblock{\em Adding closed cofinal sequences to large cardinals},
\newblock Annals of Mathematical Logic, 
volume 22 (1982),
pp. 243—261.

\bibitem{magidor}
Menachem Magidor
\newblock{\em Changing cofinality of cardinals},
\newblock Fundamenta Mathematicae, 
volume 99(1) (1978),
pp. 61–71.


\bibitem{Gitik - neg SCH}
Moti Gitik
\newblock{\em The strenght of the failure of the singular cardinal hypothesis},
\newblock Annals of Pure and Applied Logic,
volume 51(3),
pp. 215–240.

\bibitem{gitik}
Moti Gitik
\newblock{\em Prikry type forcings},
\newblock Handbook of set theory (Foreman, Kanamori, editors)
volume 2, chapter 16,
pp. 1351-1448.


\bibitem{gitik - extender}
Moti Gitik
\newblock{\em On measurable cardinals violating the continuum hypothesis},
\newblock Annuals of Pure and Applied
Logic, 63 (1993),
pp. 227-240.


\bibitem{G-Ka-Ko} M. Gitik, V. Kanovei and P. Koepke
\newblock{\em Intermediate models of Prikry generic extensions},
\newblock{\em http://www.math.tau.ac.il/~gitik}.


\bibitem{Shelah - Fur Car Arthm} 
Saharon Shelah
\newblock{\em Further Cardinal Arithmetic},
\newblock Israel Journal Of Mathematics,
volume 95 (1996), 
pp. 61–114.


\bibitem{Shelah - Card Arthm}
Saharon Shelah
\newblock{\em Cardinal Arithmetic}
\newblock Oxford University Press,
1994.



\bibitem{Zeman}
Martin Zeman
\newblock{\em Inner Models and Large Cardinals},
\newblock de Gruyter,
2001.

\end{thebibliography}
\end{document}